\documentclass[12pt,leqno,amsmath]{article}
 \usepackage{amssymb}
 \usepackage{amstext}
 \usepackage{amscd}
 \usepackage{amsthm}
  \usepackage{amsopn}
 \usepackage[dvips]{graphicx}
\def \cfd {\qed}
\newtheorem{Theo}{Theorem}[section]
\newtheorem{Lem}[Theo]{Lemma}
\newtheorem{Prop}[Theo]{Proposition}

\theoremstyle{remark}
\newtheorem*{rem}{Remark}

\theoremstyle{definition}
\newtheorem*{defn}{Definition}

\DeclareMathOperator{\length}{length}
\baselineskip=12pt

\DeclareMathOperator{\area}{area}
\title{
Functors and Computations in Floer homology with Applications \\ Part II}
\author{C. VITERBO \thanks{D\'epartement de Mah\'ematiques, B\^atiment 425, 
Universit\'e de Paris-Sud, 91405 Orsay Cedex, FRANCE. Supported by C.N.R.S.
U.R.A 1169 and Institut Universitaire de France. Current address: DMA, \'Ecole normale sup\'erieure, PSL University, 45 rue d'Ulm, 75005 Paris, FRANCE.}}
\date{October 15, 1996, \\{\small (revised on \today)}}

\begin{document}

\graphicspath{{./}{./Figures.eps/}{./Part.II/}{/..}{.}}
\maketitle

\begin{abstract}
The results in this paper concern
computations of Floer cohomology using generating functions. The first part
 proves the isomorphism between Floer cohomology and Generating function cohomology introduced by Lisa
Traynor. The second part proves that the Floer cohomology of the cotangent bundle (in the sense of
Part I), is isomorphic to the cohomology of the loop space of the base. This has many
consequences, some of which were given in Part I, others will be given in 
forthcoming papers. 
The results in this paper had been announced (with indications of proof)
 in a talk at the ICM 94 in ZŸrich.
\end{abstract}

\nocite{*}
 \tableofcontents
%%%%%%%%%%%%%%%%%%%%%%%%%%%%%%%%%%%%%%%%%%%%%%%%%%%%%%%%
\vskip 1in

\section {Introduction}

This paper is concerned with computations of Floer cohomology using generating functions.
The first part proves the isomorphism between Floer cohomology and
Generating function cohomology
introduced by Lisa Traynor in \cite{Tr}.
The statement of this theorem was given in \cite{V-ICM94} with rather precise 
indications of proof.
However, since then, we found what we consider a simpler, even though less
natural, proof. A proof along the original indications is given  by 
Milinkovi\'c and Oh in
\cite{Mil-Oh1}, \cite{Mil-Oh2}. 
The second part proves that the Floer cohomology of the cotangent bundle
(in the sense of Part I), is isomorphic to the cohomology of the loop space
of the base. This has many consequences, some of which were given in Part I,
others will be given in forthcoming papers.

We would like to point out a very interesting attempt by Joachim Weber to prove  the main theorem 
 of section \ref{II.2} using a different approach, namely by considering the gradient 
flow of the geodesic energy as a singular  perturbation of the Floer 
flow (see \cite{Weber}).

\section{Floer cohomology is isomorphic to GF-homology} \label{II.1}
Let $L$ be a Lagrange submanifold in $T^*N$,  and
assume $L$ has a generating function quadratic at infinity, that is 
$$
L = \lbrace (x, \frac{\partial S}{\partial x}) \mid \frac{\partial S}{\partial \xi}(x, \xi)=0 \rbrace
$$
where $S$ is a smooth function on $N \times {\Bbb{R}}^k$, such that $S(x, \xi)$
coincides outside a compact set with a nondegenerate quadratic form in the
fibers, $Q(\xi)$. In particular if $N$ is non-compact, $L$ coincides with
the zero section outside a compact set. 

As proved by Laudenbach and Sikorav (see \cite{L-S}) this is the case for 
$L = \phi_1(O_N)$, where $\phi_t$ is a Hamiltonian flow.

Moreover $S$ is "essentially unique" up to addition of a quadratic form in new
variables and conjugation by a fiber preserving diffeomorphism (the "fibers"
are those of the projection $N \times {\Bbb{R}}^k \to N$) see \cite{V-STGGF} and
\cite{Th}.

We may then consider, for $L_0$ and $L_1$ generated by $S_0, S_1$,
the Floer cohomology
$FH^*(L_0, L_1;a,b)$ of the chain complex $C^*(L_0,L_1;a,b)$ generated by
the points, $x$, in
$L_0 \cap L_1$ with $A(x) \in \lbrack a,b \rbrack$ where $A(\gamma)$ is
$\int_{\lbrack 0,1 \rbrack^2} u^* \omega$ where $u:\lbrack 0,1 \rbrack^2 \to
T^*N$ is a map as in figure \ref{figure8} such that
\begin {eqnarray*}
u(0,t) = \gamma_0(t)\; &\; u(s,0) \in L_0\\
u(1,t)=\gamma(t) \; &\; u(s,1) \in L_1
\end{eqnarray*}
where $\gamma_0$ is a fixed path connecting $L_0$ to $L_1$,
and $x$ is identified with the constant path $\gamma_{x}$, at $x$.

Denoting by  $C^k$  the subvector space generated by the intersection
points of
$L$ with the zero section having Conley-Zehnder index $k$, we have a
differential: $$
\delta:C^k(L_0, L_1;a,b) \to C^{k+1} (L_0, L_1;a,b)
$$
is obtained by counting the number of holomorphic strips, that are
solutions of $\bar\partial u = \frac{\partial u}{\partial s}+
J\frac{\partial u}{\partial t}=0$ where $J$ is an almost complex
structure compatible with the symplectic form\footnote{i.e.
$\omega(\xi,J\xi)$ defines a Riemannian metric}
\begin{eqnarray*}
u:{\Bbb{R}} \times \lbrack 0,1 \rbrack \to &T^*N\\
u(s,0) \in L_0  &  u(s,1) \in L_1\\
\lim_{\scriptstyle  S \to \pm \infty} u(s,t)=x_\pm.
\end{eqnarray*}
On the other hand, we have the much simpler space $H^*(S^b, S^a)$, where
$S(x,\xi,\eta)=S_1(x,\xi)-S_0(x,\eta)$, and $S^ \lambda =\{(x,\xi,\eta) \mid
S(x,\xi,\eta) \leq \lambda \}$. This cohomology group does not depend on the
choice of
$S$ according to
\cite{V-STGGF} and
\cite{Th}, up to a shift in index, and was used in \cite{Tr} as a substitute for
Floer cohomology under the name $GF$ cohomology (denoted $GF^*(L_0, L_1;a,b)$).

Our first claim is 
\begin{Theo}
$$
FH^*(L_0,L_1;a,b) \simeq GF^*(L_0,L_1;a,b)
$$ 
\end{Theo}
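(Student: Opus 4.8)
The plan is to interpolate between the two cohomology theories by an intermediate object built from an action functional on a space of paths that depends on an extra finite-dimensional parameter, so that at one end of the parameter the critical points recover $L_0\cap L_1$ with the Floer differential, and at the other end the functional degenerates to the fiberwise difference $S(x,\xi,\eta)=S_1(x,\xi)-S_0(x,\eta)$ on $N\times\mathbb{R}^{2k}$. Concretely, I would work in $T^*N$ and consider the Hamiltonian isotopies $\phi_0,\phi_1$ whose time-one images give $L_0,L_1$; after a translation one may assume $L_1=O_N$ is the zero section and $L_0=\phi(O_N)$ for a single compactly supported Hamiltonian $H$. On the Floer side the generators are the Hamiltonian chords of $H$ from $O_N$ to $O_N$, graded by Conley--Zehnder index and filtered by the symplectic action $A$; on the generating-function side we already know by \cite{V-STGGF},\cite{Th} that $H^*(S^b,S^a)$ is independent of the choice of $S$ up to an index shift, so it suffices to produce \emph{one} convenient $S$ and match it.

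The key steps, in order, are: (1) Replace $H$ by a $C^2$-small, generic, time-independent perturbation supported near the relevant chords, and use a broken/product trick to reduce to the case where all chords are short; this makes the Floer equation amenable to the adiabatic/Morse--Bott analysis and is exactly the setting where the GF side is visibly a finite-dimensional Morse theory. (2) Construct the explicit quadratic-at-infinity generating function $S$ for $L_0$ from the Laudenbach--Sikorav procedure applied to the small Hamiltonian, so that the fiber variable $\xi$ has a geometric meaning as a discretized path: broadly, one subdivides $[0,1]$, records the positions at the division points as the extra variables, and writes the discrete action; the function $S$ is then literally a finite-difference approximation of $A$. (3) Establish a bijection between the critical points of $S\colon N\times\mathbb{R}^{2k}\to\mathbb{R}$ and the chords, respecting both the index (Conley--Zehnder $=$ Morse index of $S$ up to the universal shift) and the filtration (critical value of $S$ $=$ action of the chord, up to an additive constant), so that on the level of filtered chain complexes the two are identified as graded vector spaces. (4) Identify the differentials: show that the Floer trajectories between two chords, after the adiabatic/$C^2$-small rescaling, are in bijection (with signs) with the negative gradient flow lines of $S$ between the corresponding critical points. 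This is where one invokes either a direct gluing/stretching argument or the linearization estimate that the Floer operator, when $H$ is small, is a small perturbation of the (finite-dimensional) Hessian of $S$ plus a contraction in the transverse infinite-dimensional directions, so the moduli spaces agree. (5) Conclude that the filtered complexes are isomorphic, hence $FH^*(L_0,L_1;a,b)\cong H^*(S^b,S^a)=GF^*(L_0,L_1;a,b)$, and finally remove the smallness assumption on $H$ by a continuation/naturality argument: both sides are invariant under Hamiltonian isotopy within a fixed action window (Floer by the standard continuation maps, GF by the uniqueness of $S$), so the isomorphism for small $H$ propagates to the general case, compatibly with the inclusion-induced maps in $a,b$.

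The main obstacle I expect is step (4): matching the \emph{differentials}, i.e.\ showing that the count of index-one Floer strips equals the count of gradient segments of $S$ with matching signs. The difficulty is the usual one in adiabatic arguments — one must prove a uniform $C^0$/energy estimate forcing every low-energy Floer strip for small $H$ to stay in a tubular neighborhood where the $\bar\partial$-equation can be split into a ``slow'' finite-dimensional part governed by $\nabla S$ and a ``fast'' part that is uniformly invertible, then run an implicit-function-theorem/Gromov-compactness package to get the bijection of moduli spaces, and finally check that the coherent orientations on the Floer side correspond to the Morse orientations of $S$. A secondary but real difficulty is handling the non-compact directions of $T^*N$ and of the fiber $\mathbb{R}^k$ simultaneously: one needs a maximum principle or a priori $C^0$ bound (using that $L_0,L_1$ agree with $O_N$ at infinity and $S$ is quadratic at infinity) to ensure that no action or gradient trajectory escapes to infinity, so that both filtered complexes are genuinely finitely generated in each window $[a,b]$ and the comparison is legitimate. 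The remaining steps are, by contrast, essentially bookkeeping: the index identity is a standard spectral-flow computation, and the invariance/continuation arguments are routine given Part I.
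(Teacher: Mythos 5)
Your proposal is the ``original'' adiabatic route --- the one sketched in \cite{V-ICM94} and carried out in detail by Milinkovi\'c and Oh --- whereas the paper explicitly abandons that route for a shorter argument. You and the paper share a starting philosophy (enlarge the path space by a finite-dimensional parameter and interpolate), but you then diverge completely in how the differentials are matched. You make $H$ $C^2$-small, build a discretized broken-geodesic-type $S$, and propose to identify index-one Floer strips with gradient segments of $S$ by an adiabatic/implicit-function-theorem argument; as you correctly flag, the uniform energy/$C^0$ estimate and fast--slow splitting in your step (4) are the crux, and they constitute a genuinely hard piece of analysis (they are essentially the content of the Milinkovi\'c--Oh papers). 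The paper instead defines a hybrid Floer theory $FH^*(L_0,L_1,H)$ whose generators are critical points of $A_H(\gamma,\xi)=A(\gamma)-\int_0^1 H(t,\gamma(t),\xi)\,dt$ on ${\cal P}(L_0,L_1)\times{\Bbb R}^k$, with a differential counting solutions of a coupled $\overline\partial$-equation for $u$ and ODE for $\xi(s)$. A conjugation-invariance lemma (Lemma~\ref{2.2}) reduces the problem to the pair $(O_N,O_N)$ with $H$ replaced by the gfqi $S(q,\xi)$ itself, regarded as a Hamiltonian; since this Hamiltonian is independent of $p$, the plurisubharmonic function $|p|$ satisfies the maximum principle along solutions and the boundary conditions force $p\equiv0$, whence the strip collapses to a $t$-independent curve and the Floer equation becomes \emph{literally} the finite-dimensional negative gradient-flow equation for $S$ on $N\times{\Bbb R}^k$. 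No smallness of $H$, no adiabatic limit, no asymptotic matching of moduli spaces: the identification is exact, and the hard step you singled out simply does not arise. Your route, if carried through, yields more geometric information (explicit broken-geodesic picture, a transparent index match as spectral flow) at the cost of exactly that stretching/gluing analysis; the paper's route trades that information for brevity.
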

The proof will take up the rest of this section.
\begin{rem} : This was announced in \cite{V-ICM94}, together with a sketch of
the proof. Our present proof is actually simpler than the one we had in mind,
in particular as far as the applications to the next section are concerned.
\end{rem}

We are first going to introduce a functional interpolating between $A$ and $S$.

Let $H(t,z,\xi)$ be a smooth function on ${\Bbb{R}} \times T^*N \times
{\Bbb{R}}^k$, equal to some non degenerate quadratic form $Q(\xi)$ outside a
compact set.

For $\gamma$ a path between $L_0$ and $L_1$, set $A_H(\gamma, \xi)= A
(\gamma)-\int_0^1 H(t, \gamma(t), \xi)dt$.

Set ${\cal P}(L_0, L_1)$ be the set of paths, $\gamma$, such that 
$\gamma (0) \in L_0 ,\quad
\gamma(1) \in L_1$. Then the critical points of $A_H$ on ${\cal P} (L_0, L_1)
\times {\Bbb{R}}^k$ are the pairs $(\gamma, \xi)$ with 
$$
\left \lbrace
\begin{array}{l}
\stackrel{.}{\gamma} - X_H (\gamma)=0\\
\int_0^1 \frac{\partial}{\partial \xi} H(t, \gamma(t),\xi) dt=0
\end{array}
\right.
$$

where $X_H$ is the Hamiltonian vector field associated to the function $z \to
H(t,z,\xi)$ (i.e.  $\xi$ is "frozen").

\begin{figure}
\centering
	\includegraphics*[scale=.65]{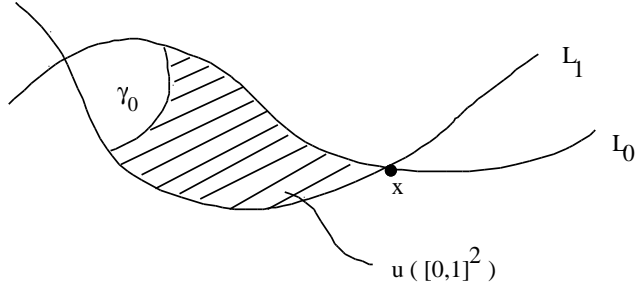}
	\caption{The map $u$.}
\label{figure8}
\end{figure}

If we denote by $\phi ^t_\xi$ the Hamiltonian flow of $X_H$ (for fixed $\xi \in
{\Bbb{R}}^k$) we have $\gamma(t)= \phi^t _\xi(\gamma(0))$, so that the
critical points of $A_H$ correspond to 
$$
\lbrace z \in L_0 \cap (\phi ^t_\xi)^{-1}(L_1) \vert \int_0^1
\frac{\partial}{\partial \xi} H(t, \phi^t _\xi (z), \xi)dt =0\rbrace 
$$
Note that
$$
L_H= \lbrace \phi^1 _\xi (z) \vert z \in L_0, \int_0^1 \frac{\partial}{\partial 
\xi} H (t, \phi ^t_\xi(z), \xi)dt=0 \rbrace
$$
is an immersed Lagrange submanifold, and the above critical points correspond to
$L_H\cap L_0$.

Consider an almost complex structure on $M$ compatible with the 
symplectic form, and denote by $\overline \partial = 
\frac{\partial}{\partial s}+ J \frac{\partial}{\partial t}$.

We now define the Floer cohomology of $A_H$ as usual:

\noindent - $C^*$ will be generated by the critical points of $A_H$ on ${\cal
P}(L_0,L_1)$

\noindent - $<d(x_-,\xi_-),(x_+,\xi_+)>$ equals the algebraic number of
solutions of 
$$
\left \lbrace
\begin{array}{l} u: {\mathbb  R}\times [0,1] \Leftrightarrow M \\
\overline \partial u(s,t)=-\nabla H(t,u(s,t),\xi(s))\\
\frac{d}{ds}\xi(s)=-\int_0^1 \frac{\partial}{\partial \xi} H(t,u(s,t),\xi(s))dt
\end{array}
\right.
$$
satisfying $\displaystyle \lim_{S \to\pm\infty}(u(s,t), \xi(s))=(x \pm, \xi
\pm)$.

Note that the set of such solutions has its image in a bounded subset of $T^*N
\times {\mathbb R} ^k$.

Indeed, for $u$ outside a bounded subset, $H$ vanishes, so the equation becomes

$$
\left \lbrace
\begin{array}{l}
\overline \partial u=0\\
\frac{d}{ds}\xi(s)= - \int_{0}^1 \frac{\partial}{\partial \xi}H(t,0,\xi) dt
\end{array}
\right.
$$
But the first equation cannot hold in the region foliated by pseudoconvex
hypersurfaces.

If on the other hand $\vert \xi \vert$ is large the equation will be
$$
\left \lbrace
\begin{array}{l}
\overline \partial u=-\nabla H(t,u,\xi)\\
\frac{d}{ds}\xi(s)= B \xi
\end{array}
\right.
$$
where $Q(\xi)=<B \xi, \xi>$. 

But the second equation is such that any bounded
set is contained in a set with the property that if a trajectory exits the set, 
it will never reenter it.

Thus, the set of bounded solutions has its image in a bounded set.
Then the set of solutions  satisfies the same formal properties
as the set of solutions of the usual Floer equation. The proofs are
just verbatim translations of those in \cite{F1}.
 
\begin{defn} The cohomology of $(C^*(L_0, L_1,H),d)$ 
will be denoted by $FH^*(L_0, L_1,H)$.
If we restrict the complex to those solutions with $A_H(\gamma ,\xi) \in [a,b]$
the cohomology is denoted  $FH^*(L_0, L_1,H; a,b)$.
\end{defn}

A first result is
\begin{Lem}\label{2.2} Let $\psi_\lambda$ be a Hamiltonian flow on 
$T^*N$, and $H_\lambda$ , $F_\lambda$ be the 
Hamiltonians with  flows $\psi_\lambda^{-1} \circ \phi_\xi^t \circ \psi_\lambda$
and  $\phi_\xi^t \circ (\psi_\lambda^t )^{-1}$.

Then

$$
FH^*(L_0,
\psi_1(L_1),H_0)=FH^*(\psi_1^{-1}(L_0),L_1,H_1)=FH^*(L_0,L_1,F_1)
$$
\end{Lem}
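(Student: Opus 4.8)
The plan is to construct, for each of the three Floer theories in the statement, an explicit bijection between the generators of the chain complexes and an identification of the moduli spaces defining the differentials, so that the three chain complexes are literally isomorphic (not merely quasi-isomorphic). Everything here is driven by the change-of-variables principle: a Hamiltonian isotopy $\psi_\lambda$ of $T^*N$ acts on paths, on Lagrangians, and on the auxiliary equation, and we want to track how $A_H$, the critical point equation, and the pseudoholomorphic strip equation transform under it. Since the statement asserts three equalities, I would prove two of them: first $FH^*(L_0,\psi_1(L_1),H_0)=FH^*(\psi_1^{-1}(L_0),L_1,H_1)$, and then $FH^*(\psi_1^{-1}(L_0),L_1,H_1)=FH^*(L_0,L_1,F_1)$; the first is a genuine change of boundary conditions, the second is a reparametrization that absorbs $\psi_\lambda$ into the Hamiltonian term.

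For the first equality, I would use the map on paths sending $\gamma\mapsto\psi_1^{-1}\circ\gamma$. This sends a path from $L_0$ to $\psi_1(L_1)$ to a path from $\psi_1^{-1}(L_0)$ to $L_1$, giving a bijection $\mathcal P(L_0,\psi_1(L_1))\to\mathcal P(\psi_1^{-1}(L_0),L_1)$. One then checks that the critical points of $A_{H_0}$ on the left correspond to critical points of $A_{H_1}$ on the right: this is exactly the content of the two defining equations displayed in the excerpt, using that $H_1$ generates $\phi_\xi^t\circ(\psi_\lambda^t)^{-1}$ — the conjugation/composition relations for Hamiltonian flows are precisely rigged so that $z\in L_0\cap(\phi_\xi^t)^{-1}(\psi_1(L_1))$ translates into the corresponding incidence condition for $H_1$. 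Next I would verify that $A_{H_0}(\gamma,\xi)=A_{H_1}(\psi_1^{-1}\circ\gamma,\xi)$ up to a constant (independent of the critical point, or a controlled shift coming from the fixed reference path $\gamma_0$), so that the action filtration $[a,b]$ is respected. Finally, for the differential, one pushes the pseudoholomorphic strip $u(s,t)$ through $\psi_1^{-1}$ and the almost complex structure $J$ through $(\psi_1)_*$; since $\psi_1$ is a symplectomorphism the transported $J'=(\psi_1^{-1})_*J(\psi_1)_*$ is again compatible, and the perturbed $\bar\partial$-equation with its coupled $\xi$-ODE transforms into the corresponding equation for $H_1$. This gives a bijection of moduli spaces preserving all the relevant structure (dimensions, signs, compactness), hence an isomorphism of complexes.

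For the second equality, $FH^*(\psi_1^{-1}(L_0),L_1,H_1)=FH^*(L_0,L_1,F_1)$, the idea is that $\psi_1^{-1}(L_0)$ with the "moving" Hamiltonian $H_1$ is the same data as $L_0$ with the Hamiltonian $F_1$ generating $\phi_\xi^t\circ(\psi_\lambda^t)^{-1}$, after the time-dependent change of variables $\gamma(t)\mapsto\psi_\lambda^t(\gamma(t))$ (or its inverse). This is the standard trick that turns a Hamiltonian action functional with displaced boundary condition into one with the original boundary condition and a modified time-dependent Hamiltonian; here it must be carried out in the presence of the extra variable $\xi$, which rides along untouched. Again one checks the correspondence of critical points, of the action values, and of the perturbed holomorphic-strip moduli spaces, the last requiring that the (now time-dependent) pushforward of $J$ stays compatible and that the coupled $\xi$-equation is unaffected — which it is, since the $\xi$-ODE only sees $\int_0^1\partial_\xi H\,dt$, and that integral is invariant under the reparametrization by $\psi_\lambda^t$.

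The main obstacle I expect is bookkeeping rather than analysis: making sure the various flow identities ($H_\lambda$ generating $\psi_\lambda^{-1}\circ\phi_\xi^t\circ\psi_\lambda$, etc.) are used with the correct time-ordering and signs, and checking that the action functionals agree \emph{on the nose} up to a constant that does not move with the critical point — otherwise the filtered statement $FH^*(\cdots;a,b)$ would fail even if the unfiltered one held. A secondary point requiring care is that $L_H$ is only an \emph{immersed} Lagrangian, so one should confirm that $\psi_1$ and the reparametrization respect the relevant transversality and that no extra intersection points are created or destroyed; but since $\psi_1$ is a diffeomorphism this is automatic once the correspondence of generators is set up. All the compactness and gluing arguments are unchanged because, as noted in the excerpt, they are verbatim translations of Floer's, and a symplectomorphism carries one such package to another. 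I would therefore present the proof as: (i) define the path bijections, (ii) match critical points via the flow identities, (iii) match action values, (iv) transport $J$ and the perturbed equation, and conclude.
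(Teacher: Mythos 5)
Your proposal takes a genuinely different route from the paper, and it is worth spelling out the contrast. The paper does \emph{not} build explicit chain isomorphisms. Instead, for each equality it introduces a one-parameter family of Floer complexes (for the first, $C^{*}(\lambda)=C^{*}(\psi_\lambda^{-1}(L_0),\psi_\lambda^{-1}\psi_1(L_1),H_\lambda)$; for the second, $C^{*}(L_0,\psi_\lambda^{-1}\psi_1(L_1),F_\lambda)$) and shows that along each family the generator set is canonically constant, the transversality is preserved, and the critical values of $A_{H_\lambda}$ (resp.\ $A_{F_\lambda}$) do not move. That data feeds into the filtered Floer continuation argument and yields the isomorphism without ever writing down a map of complexes; in particular the dependence on $J$ never enters explicitly. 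Your approach buys explicitness: a chain-level bijection of generators and a bijection of moduli spaces. But it then has to pay for the fact that conjugating the $\overline\partial$-equation by $\psi_1^{-1}$ changes $J$ to $(\psi_1^{-1})_{*}J$, so you end up with an isomorphism between $FH^{*}$ for two different almost complex structures, and you must still invoke a continuation argument (in $J$) to close the loop. At the filtered level this is harmless, because deforming $J$ does not move critical values, but it should be said explicitly; the paper's formulation sidesteps this bookkeeping, which is plausibly why the author chose it.

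There is also a concrete rerouting issue in your treatment of the second equality that you should fix. The time-dependent change of variables $u(s,t)\mapsto(\psi^t)^{-1}u(s,t)$, with $\psi^0=\mathrm{id}$, fixes the $t=0$ boundary and moves the $t=1$ boundary from $\psi_1(L_1)$ to $L_1$. Hence it identifies $\mathcal P(L_0,\psi_1(L_1))$ with $\mathcal P(L_0,L_1)$ and relates $FH^{*}(L_0,\psi_1(L_1),H_0)$ to $FH^{*}(L_0,L_1,F_1)$ --- \emph{not}, as you wrote, $FH^{*}(\psi_1^{-1}(L_0),L_1,H_1)$ directly to $FH^{*}(L_0,L_1,F_1)$: those two path spaces differ at the $t=0$ endpoint, and no reparametrization of the form $\gamma(t)\mapsto\rho_t(\gamma(t))$ with $\rho_0=\mathrm{id}$ can connect them. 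The clean statement is that your change of variables supplies the link $FH^{*}(L_0,\psi_1(L_1),H_0)\simeq FH^{*}(L_0,L_1,F_1)$, and you obtain the claimed equality by composing with your first (correct) conjugation isomorphism $FH^{*}(L_0,\psi_1(L_1),H_0)\simeq FH^{*}(\psi_1^{-1}(L_0),L_1,H_1)$. This is in fact exactly how the paper's two $\lambda$-families fit together: both have $\lambda=0$ endpoint $FH^{*}(L_0,\psi_1(L_1),H_0)$. Finally, your observation that the $\xi$-ODE is unaffected is correct but for the more specific reason that the Hamiltonian $K_\lambda$ generating $\psi^t_\lambda$ is independent of $\xi$, so $\partial_\xi F_\lambda(t,z,\xi)=\partial_\xi H(t,\psi^t_\lambda(z),\xi)$; the paper makes this explicit after normalizing the time supports of $H$ and $K_\lambda$, and you would want to record that computation to justify the critical-point and action-value matchings on the nose.
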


\begin{proof}
Consider first the chain complex
$C^*(\lambda) = C^*( \psi_\lambda^{-1}(L_0),\psi_\lambda^{-1}
\psi_1(L_1),H_\lambda)$ for $\lambda$ in $\lbrack 0,1 \rbrack$. It is
generated by points in $(\psi_\lambda^{-1}(L_0))_{H_{\lambda}} \cap
\psi_\lambda^{-1}\psi_1(L_1)$.

But $(\psi_\lambda^{-1}(L_0))_{H_{\lambda}}$ is defined as
$$
\lbrace \phi_{\lambda,\xi}^1(\psi_\lambda^{-1}(z)) \vert z \in L_0;\int_0^1 \partial_\xi
H_\lambda(t,\phi_{\lambda,\xi}^t(\psi_\lambda^{-1}(z)),\xi)dt=0 \rbrace
$$
and since $H_\lambda(t,u,\xi)=H(t,\psi_\lambda(u),\xi)$ we have
$H_\lambda(t,\phi_{\lambda,\xi}^t \circ \psi_\lambda^{-1}(z), \xi)=
H(t,\phi_\xi^t(z),\xi)$ and $\phi_{\lambda, \xi}^t \circ
\psi_\lambda^{-1} = \psi_\lambda^{-1} \circ \phi_\xi^t$, we have that
$(\psi_\lambda^{-1} (L_0))_{H_{\lambda}} = \psi_\lambda^{-1}
((L_0)_{H})$ and $\psi_\lambda^{-1}((L_0)_H) \cap
\psi_\lambda^{-1}(\psi_1(L_1))= \psi_\lambda^{-1}((L_0)_H \cap
\psi_1(L_1))$

Thus the generators of $C^*(\lambda)$ do not depend on $\lambda$.

Note also that the intersection points of
$(\psi_\lambda^{-1}(L_0))_{H_{\lambda}}$ and
$\psi_\lambda^{-1}(\psi_1(L_1))$ stay transverse provided they are
transverse for $\lambda=0$ (that we shall always assume), and this is sufficient,
together with the fact that the value of $A_{H_\lambda}$ on the critical points
does not depend on $\lambda$,
to imply that the cohomology of $C(\lambda)$ will not depend on
$\lambda$.

Consider now the second equality. Note that we may deform $H$, provided
its time one flow is unchanged, and the same for the flow $\psi_\lambda$.

We may thus assume that  $\partial_z H$ vanishes,
(remember that  $H$ must be quadratic in $\xi$, hence it cannot vanish) for $t$ in $\lbrack 0, \frac{2}{3}
\lbrack$, and $K_\lambda$, vanishes for $t$ in $\lbrack \frac{1}{3},1
\rbrack$.

Then the flow $(\psi_\lambda^t)^{-1} \circ \phi_\xi^t$ is generated by
$H(t, \psi_\lambda^t(z), \xi) + K_\lambda(t,z)=F_\lambda(t,z,\xi)$ and
therefore
$$
\partial_\xi F_\lambda (t,z,\xi)= \partial_\xi H(t, \psi_\lambda^t(z), \xi)
$$
As a result,
\begin{eqnarray*}
(\psi_\lambda^{-1}(L_0))_{H_{\lambda}} &=& \lbrace \phi_{\lambda, \xi}^
{1}(\psi_\lambda^{-1}(z)) \vert z \in L_0, \int_0^1 \partial_\xi
H_\lambda(t,\phi_{\lambda, \xi}^t \circ \psi_\lambda^{-1}(z), \xi)dt=0
\rbrace\\
&=&\lbrace \psi_\lambda^{-1} \phi_\xi^1(z) / z \in L_0; \int_0^1
\partial_\xi H(t, \phi_\xi^t(z),\xi)dt=0\rbrace\\
&=& \lbrace \psi_\lambda^{-1} \phi_\xi^1 (z) / z \in L_0; \int_0^1
\partial_\xi F_\lambda(t,(\psi_\lambda^t)^{-1} \circ \phi_\xi^t(z),
\xi)dt=0\rbrace\\
&=&L_{F_{\lambda}}
\end{eqnarray*}

Thus we have 
$$
(\psi_\lambda^{-1}(L_0))_{H_{\lambda}}=\psi_\lambda^{-1}((L_0)_H)=L_{F_{\lambda}}
$$
and $L_{F_{\lambda}} \cap \psi_\lambda^{-1} \psi_1(L_1)$ is independent
of $\lambda$, and by the same argument as above,  $FH^*(L_0, \psi_\lambda^{-1}
\psi_1(L_1),F_\lambda)$ does not depend on $\lambda$. 
Using again the invariance of the critical levels, we  proved the
second equality.
\end{proof}

In particular for $L_0=O_N,L=\phi_1(O_N)$ 

\noindent  we get
$$
FH^*(O_N,L,0)=FH^*(O_N, O_N,H)
$$
Now let $S(q,\xi)$ be a $g.f.q.i.$ for $L$. The flow of $X_S$ is given by
$$
\left \lbrace
\begin{array}{l}
\dot q=0\\
\dot p =\frac{\partial S}{\partial q}(q,\xi)
\end{array}
\right.
 {\rm{ or
}}
\left \lbrace
\begin{array}{l}
q(t)\equiv q(0)\\
p(t)=t \frac{\partial}{\partial q} S(q,\xi) + p(0).
\end{array}
\right.
$$
and $(O_N)_S= \lbrace q,\frac{\partial S}{\partial q}(q, \xi)) \vert \int_0^1
\partial_\xi S(q(t), \xi)dt=0 \rbrace$ but since $q(t) \equiv q(0)$, we
have that $\int_0^1 \partial_\xi S(q(t), \xi)dt= \partial_\xi S(q, \xi)$,
hence $(O_N)_S$ is the Lagrange submanifold generated by $S$.

Now we claim
\begin{Lem} For $S$ a $g.f.q.i$ we have $FH^*(O_N,O_N,S;a,b)
\simeq H^*(S^b,S^a)$.
\end{Lem}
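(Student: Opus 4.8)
The idea is that when the Hamiltonian $H$ is taken to be $S(q,\xi)$ itself — a function which does not depend on the fiber variable $p$ and whose flow $\phi^t_\xi$ is the explicit shear $(q,p)\mapsto (q, p+t\,\partial_q S(q,\xi))$ computed above — the Floer equation for $A_H = A_S$ degenerates, in a suitable limit, into the negative gradient flow of the finite-dimensional function $S(x,\xi,\eta)=S_1(x,\xi)-S_0(x,\eta)$ (here with $L_0=L_1=O_N$, so $\eta$ is the extra variable coming from the second copy). First I would make the generating function time-independent and autonomous, so that the flow of $X_S$ is exactly the shear displayed in the text, and identify the critical points of $A_S$ on $\mathcal P(O_N,O_N)\times\mathbb R^k$: by the computation just given, $(O_N)_S$ is the Lagrangian generated by $S$, and $(O_N)_S\cap O_N$ consists precisely of the points $(q,\xi)$ with $\partial_q S(q,\xi)=0$ and $\partial_\xi S(q,\xi)=0$, i.e. the critical points of $S$. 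One checks that the action value $A_S$ at such a critical point equals $S(q,\xi)$ (the $\int u^*\omega$ term vanishes because $q$ is constant along the flow and the strip can be taken to lie over a single $q$), and that the Conley–Zehnder index matches the Morse index of $S$ up to the global shift dictated by the signature of $Q$, using \cite{V-STGGF}, \cite{Th}. This already gives an isomorphism of the underlying graded vector spaces $C^*(O_N,O_N,S;a,b)\cong C^*_{\mathrm{Morse}}(S;a,b)$.

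Next I would compare the differentials. The Floer trajectory equation here is $\bar\partial u = -\nabla H(t,u,\xi(s))$ together with $\dot\xi(s) = -\int_0^1 \partial_\xi H(t,u(s,t),\xi(s))\,dt$. I would introduce a scaling parameter: replace $S$ by $\epsilon S$ (equivalently rescale the metric / the $t$-variable), so that as $\epsilon\to 0$ the $t$-dependence of a finite-energy solution is squeezed out and $u(s,\cdot)$ converges to a path that is constant in $t$ (because $\bar\partial u \to 0$ forces $u$ to be $t$-independent holomorphic, hence locally constant in $s$ direction too in the limit — more precisely one shows the solutions concentrate near the space of $t$-constant maps into $O_N$, which is just $N$). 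On that limit space the remaining equations become $\frac{d}{ds}q = -\partial_q S(q,\xi)$, $\frac{d}{ds}\xi = -\partial_\xi S(q,\xi)$, i.e. the negative gradient flow of $S$. This is the standard "adiabatic limit" or "stretching" argument, in the spirit of Floer's original continuation and of Salamon–Zehnder and Milinković–Oh; one must prove (i) a uniform $C^0$ and energy bound (already sketched in the text — solutions stay in a bounded region, and for $|\xi|$ large trajectories escape to infinity monotonically), (ii) that for small $\epsilon$ the moduli spaces of Floer trajectories are in bijection, with matching signs, to the Morse trajectories of $S$, via an implicit-function-theorem gluing once the limit problem is shown to be transverse, and (iii) compactness/no-breaking so that the bijection is onto. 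Since the target here is literally a cotangent fibre-shear and $H$ is $p$-independent, this limit analysis is substantially simpler than the general case — there is essentially no bubbling and the strips that could contribute have a very restricted form.

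The main obstacle is exactly step (ii)–(iii): establishing the adiabatic/transversality estimates rigorously — a uniform bound on the $t$-oscillation of solutions of order $O(\epsilon)$, surjectivity of the linearized gluing map, and ruling out trajectory breaking in the limit. I would handle the a priori bounds using the convexity at infinity in the $p$-direction (the pseudoconvex foliation argument already in the text) and the quadratic-at-infinity behaviour in $\xi$ (monotone escape, as in the text), reducing everything to a compact region where $S$ is a genuine Morse function after a small perturbation; then the adiabatic limit is a parametrized transversality + gluing statement of the type made rigorous by Milinković–Oh in \cite{Mil-Oh1}, \cite{Mil-Oh2}. Filtration by the action $A_S$, which equals $S$ on critical points and decreases along Floer trajectories just as Morse energy decreases along gradient lines, is preserved throughout, so the isomorphism respects the sublevel structure and gives $FH^*(O_N,O_N,S;a,b)\cong H^*(S^b,S^a)$ on the nose. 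Combined with Lemma \ref{2.2} and the identity $FH^*(O_N,L,0)=FH^*(O_N,O_N,H)$, and after checking the index shift is the same one appearing in Traynor's definition of $GF^*$, this yields the Theorem.
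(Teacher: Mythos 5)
Your proposal is correct in outline but takes a genuinely different, and substantially heavier, route than the paper. You set up an adiabatic limit: replace $S$ by $\epsilon S$, show that as $\epsilon \to 0$ finite-energy Floer strips concentrate near $t$-constant maps, and then use an implicit-function-theorem / gluing argument to obtain a bijection between Floer and Morse trajectories for small $\epsilon$. That is precisely the strategy the paper attributes to Milinkovi\'c and Oh (\cite{Mil-Oh1}, \cite{Mil-Oh2}) and describes as ``along the original indications'' of the ICM announcement. It works, but it requires exactly the technical machinery you flag as the ``main obstacle'': uniform $C^0$ and energy bounds, transversality of the limit problem, gluing surjectivity, and a compactness/no-breaking argument.

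The paper bypasses all of this by noticing that, for the specific Hamiltonian $H(q,p,\xi) = S(q,\xi)$ (independent of $p$) and a suitable almost complex structure, the Floer trajectories \emph{are} the Morse trajectories on the nose --- no limit, no gluing. The key observation is that $\nabla_q S$ is horizontal, so the perturbed Cauchy--Riemann equation $\bar\partial u = -\nabla_q S$ has the property that $\bar\partial u \in \ker d|p|$; since $|p|$ is plurisubharmonic, $|p\circ u|$ satisfies the maximum principle. With boundary conditions $p(s,0)=p(s,1)=0$ on $O_N$, this forces $p \equiv 0$ on the whole strip. Once $p\equiv 0$, the Cauchy--Riemann system collapses to $\partial_t q = 0$ and $\partial_s q = -\nabla_q S(q,\xi)$, and together with $\dot\xi = -\partial_\xi S$ this is literally the negative gradient flow of the finite-dimensional function $S(q,\xi)$. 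The coboundary maps are therefore identical as chain maps, not merely chain-homotopic after a limit, and the action filtration is tautologically preserved. Compared with your version, the paper's argument trades conceptual naturality (the adiabatic picture) for a short, self-contained proof that needs no parametrized transversality or compactness analysis. If you want to pursue your route, the place you would have to do real work is precisely (ii)--(iii): the Morse transversality of the limit problem is not automatic for a g.f.q.i.\ and has to be arranged by perturbation, and ruling out breaking of Floer strips in the $\epsilon\to 0$ limit requires the monotone-escape and pseudoconvexity estimates you mention, applied uniformly in $\epsilon$. Also a small correction: in this lemma $L_0 = L_1 = O_N$ and $S_0 = 0$, so there is no auxiliary $\eta$-variable; the difference function $S_1 - S_0$ is just $S$ itself, and your parenthetical about ``$\eta$ coming from the second copy'' does not apply here.
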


\begin{proof}
Indeed the generators on both sides are critical points of $S$, and
connecting trajectories solve:

$$
\left \lbrace
\begin{array}{l}
\overline \partial u=-\nabla_q S(q(s,t),\xi(s))ds\\
\frac{d}{ds} \xi(s)=-\int_0^1 \partial_\xi S(q(s,t), \xi(s))dt.
\end{array}
\right.
$$
where $u(s,t)=(q(s,t),p(s,t))$

Now the function $f(q,p)=\vert p \vert$ is pluri-subharmonic for our $J$,
and since $\overline \partial u$ is tangent to $\ker df$ (note that $\nabla_q S$
is horizontal), we have that $ f\circ u$  satisfies the maximum principle.
Since $p(s,1)=p(s,0)=0$, we must have $p\equiv 0$, and thus

\begin{eqnarray*}
\frac{\partial}{\partial s} q(s,t) & = & - \nabla_q S(q(s,t), \xi(s))\\
\frac{\partial}{\partial t} q(s,t) & = & 0
\end{eqnarray*}
In other words, $q$ only depends on $s$, and satisfies

$$
\left \lbrace
\begin{array}{l}
\dot q= -\nabla_q S(q(s), \xi(s))\\
\dot \xi (s)  =-\partial_\xi S(q(s), \xi(s))
\end{array}
\right.
$$
and $\displaystyle\lim_{s \to \pm \infty}(q(s), \xi(s))=(q_\pm, \xi_\pm)$
are critical points for $S$.

Thus our connecting trajectories, are just bounded gradient trajectories
for $S$, and thus the coboundary map is the same on both complexes,
hence the cohomologies are the same. \cfd

We may finally conclude our proof. 
Let $L=\phi_1(0_N)$, so that it has a generating function $S$.

Now let $S_{\lambda }$ be the generating function of $L_ \lambda =\phi_
\lambda^{-1}(L)$, so that $S_0=0$ and $S_1=S$. We claim that the modules
$FH^*(L_\lambda,0_N,S_ \lambda ,a,b)$ are all isomorphic. Indeed, $L_ \lambda \cap
(0_N)_{S_ \lambda } = \phi_ \lambda^{-1} (0_N) \cap \phi_ \lambda^{-1} (L) = \phi_
\lambda^{-1} (0_N \cap L) $ is constant and the usual argument proves the constancy
of the Floer cohomology ring. It remains to show that $$FH^*(\phi_
1 ^{-1}(0_N),0_N,0)= FH^*(0_N, \phi_ 1(0_N),0)$$ Again this follows from the equality

$FH^*(\phi_ 1 ^{-1}(0_N),0_N,0)= FH^*(\phi_t\circ \phi_1^{-1}(0_N), \phi_t(0_N),0)$,
because $\phi_t\phi_1^{-1}(0_N) \cap \phi_t(0_N) = \phi_t(\phi_1^{-1}(0_N) \cap
0_N)$.

We thus proved that
$$
FH^*(O_N,L,0;a,b) \simeq FH^*(O_N, O_N,S;a,b)
$$

Since
$$
FH^*(O_N,L,0;a,b) \simeq FH^*(L;a,b)
$$
 and the above lemma proves states that
$$
FH^*(O_N, O_N,S;a,b) \simeq H^*(S^b,S^a)=GH^*(L;a,b)
$$

this concludes our proof. 
\end{proof}

\section{The isomorphism of
\protect \boldmath$FH^*(DT^*N)$ \protect\unboldmath
and  \protect \boldmath$H^*(\Lambda N)$ \protect\unboldmath }\label{II.2}

We will now compute  $FH^*(M)$ for $M=DT^*N = \{(q,p)\in 
T^*N \mid \vert p \vert _{g}=1\}$ for some metric $g$. 

Let us denote by $\Lambda N$ the free lopp space of $N$ (i.e. 
$C^0(S^1, N)$ ).

We have:

\begin{Theo}\label {3.1} 
$$
FH^*(DT^*N) \simeq H^*(\Lambda N)
$$
where $\Lambda N$ is the free loop space of $N$. The same holds for $S^1$
equivariant cohomologies with rational coefficients.
\end{Theo}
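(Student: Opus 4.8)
The plan is to compute $FH^*(DT^*N)$ by choosing a Hamiltonian $H$ on $T^*N$ that is the geodesic energy (or grows quadratically in $|p|$) outside a neighbourhood of the zero section, so that the Floer complex $FH^*(DT^*N)$ — which by Part I is a limit over increasing Hamiltonians — is represented by Hamiltonians of the form $H(q,p) \sim h(|p|)$ with $h$ convex and superlinear. The generators are then $1$-periodic orbits of $X_H$, which for such radial Hamiltonians correspond to (reparametrised) closed geodesics of $N$ together with the constant orbits on the zero section; the Conley--Zehnder index of a closed geodesic equals (up to a shift) its Morse index as a critical point of the energy functional on $\Lambda N$. This already strongly suggests the isomorphism, and the natural strategy is to build a chain-level map by a "stretching" or adiabatic argument relating the Floer trajectories to gradient trajectories of the energy functional $E(\gamma)=\frac12\int_0^1|\dot\gamma|^2\,dt$ on $\Lambda N$.

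The key steps, in order, would be: (1) Set up the action filtration and show $FH^*(DT^*N) = \varinjlim_a FH^*_{(-\infty,a]}(H)$ for $H$ quadratic at infinity, matching the filtration by sublevel sets of $E$ on $\Lambda N$; here one uses Lemma~\ref{2.2}-type invariance statements to replace a general $H$ by a convenient radial one without changing the cohomology. (2) Use the generating-function technology of Section~\ref{II.1}: for a Hamiltonian whose time-one flow is close to the identity the orbits are generated by a function on a finite-dimensional space, and by iterating/composing one can express the Floer complex of the geodesic Hamiltonian via a generating function for the "broken geodesic" approximation of $\Lambda N$ — i.e. realise $\Lambda N$ as a limit of finite-dimensional spaces $N^{\times m}$ of $m$-fold broken geodesics with the energy as a function on them, exactly as in classical Morse theory of the loop space (Milnor). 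Combined with Theorem~2.1 (the $FH \simeq GF$ isomorphism) applied fibrewise/iteratively, this identifies $FH^*$ with the cohomology of sublevel sets of the broken-geodesic energy, hence with $H^*(\Lambda N)$. (3) Check the index correspondence (Conley--Zehnder vs. Morse index of the energy) so that the grading matches. (4) For the $S^1$-equivariant statement with $\mathbb{Q}$-coefficients, redo the argument equivariantly with respect to the reparametrisation circle action, using that over $\mathbb{Q}$ one may pass to the Borel construction and that the broken-geodesic models carry compatible $S^1$-actions; the energy functional is $S^1$-invariant, so the filtration argument goes through equivariantly.

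The main obstacle I expect is step (2): making precise the passage from the infinite-dimensional Floer complex of the geodesic Hamiltonian to the finite-dimensional broken-geodesic (generating-function) model, uniformly in the action level, and controlling the resulting maps so they are genuine chain maps compatible with the continuation maps as $a\to\infty$. In particular one must ensure that the holomorphic strips for the geodesic Hamiltonian stay in the region where the generating-function description is valid (a priori $C^0$ and energy estimates, plus the maximum principle argument of the type used in the last lemma to force $p$-components to be controlled), and that the "essential uniqueness" of generating functions (\cite{V-STGGF}, \cite{Th}) lets one identify the direct limit independently of all the auxiliary choices. A secondary but real difficulty is the equivariant refinement: transversality for $S^1$-equivariant Floer theory is delicate, which is presumably why the statement is restricted to rational coefficients, where one can instead work with the (non-equivariant) complex together with the circle action on homology and take $S^1$-equivariant cohomology via $H^*_{S^1}(X;\mathbb{Q}) = H^*(X;\mathbb{Q})\otimes H^*(BS^1;\mathbb{Q})$-module structure coming from the action, matching the corresponding structure on $H^*_{S^1}(\Lambda N;\mathbb{Q})$.
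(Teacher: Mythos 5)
Your overall strategy --- discretize $\Lambda N$ via broken geodesics and use the generating-function/Floer comparison to match the two sides --- is the route taken in the paper, so the plan is not off course. But the step you explicitly flag as the ``main obstacle'' is the actual content of the proof, and your sketch does not supply the idea that makes it work. First, Theorem~2.1 concerns Lagrangian Floer cohomology of Lagrangians carrying a global g.f.q.i., whereas $FH^*(DT^*N)$ is built from periodic-orbit Floer cohomology. To invoke the GF machinery at all you must first convert the periodic problem into a Lagrangian one; the paper does this via Lemma~\ref{3.2} ($FH^*(H;a,b)\simeq FH^*(\Delta,(\mathrm{Id}\times\varphi^1)\Delta;a,b)$ in $M\times\overline M$) and then Lemma~\ref{3.3} (passing to $\Delta^r$ and $\Gamma^{\rho,r}_\psi$ in $(M\times\overline M)^r$ with $\varphi=\psi^r$). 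This iterated-graph step is what makes the Lagrangian $C^1$-close to the diagonal, so that the generating function $S_\Phi$ of \cite{V-Loopspaces} exists; ``applying Theorem~2.1 fibrewise/iteratively'' without this reduction has no precise meaning.

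Second, $S_\Phi$ is defined only over the neighbourhood ${\cal U}_{r,\varepsilon}$ of the diagonal, not globally, so Theorem~2.1 still does not apply directly. The crux is to show that every relevant Floer strip stays in $T^*{\cal U}_{r,\varepsilon_0}$, and to do so \emph{uniformly in $r$}, since $r\to\infty$ is the discretization limit. The bare maximum principle you gesture at does not give this. The paper instead (i) normalizes $h$ so that all critical values of $S_\Phi$ lie in $[-\delta,0]$, bounding the symplectic area of any connecting strip by $2\delta$; (ii) proves a monotonicity-type lower bound $\pi\alpha^2\exp(\eta(c\alpha))$ for the area of any pseudoholomorphic curve with boundary on the two Lagrangians that crosses the shell ${\cal U}_{r,\varepsilon_0}\setminus{\cal U}_{r,\varepsilon_0/2}$, using symmetrization of $\Sigma$ in a pseudoconvex symmetric tubular neighbourhood (the Appendix Lemma); and (iii) observes that the curvature constant $c$ entering this bound stays bounded as $r\to\infty$. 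Choosing $\delta<\varepsilon_0/4$ then forces confinement. After that one still has to convert the Floer differential into the Thom--Smale--Witten differential of $\nabla S_\Phi$ (the Lemma relating $\overline\partial_J$-strips to gradient lines of $f$), then pass from $\nabla S_\Phi$ to the pseudogradient $\xi_\Phi$ of \cite{V-Loopspaces} by a Conley-index interpolation, and from $HI^*(S_\Phi^b,S_\Phi^a;\xi_\Phi)$ to $H^{*-d}(E_r^b,E_r^a)\simeq H^{*-d}(E^b,E^a)$. Finally, the $S^1$-equivariant statement is obtained in the paper through the ${\mathbb Z}/r$-action (cyclic shift on $\Delta^r$, $z\mapsto\psi(z)$ on fixed points), letting $r\to\infty$ and invoking the Lemma in appendix~2 of \cite{V-Loopspaces}; your proposed $H^*(BS^1;{\mathbb Q})$-module route is plausible in spirit but is not the argument given and does not by itself explain how the finite discrete approximation recovers genuine $S^1$-equivariance.
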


The proof will take up the rest of this section. The next two lemmata are
valid in any manifold satisfying the assumptions of section \ref{II.1}.
First let us consider the
diagonal $\Delta$ in $M \times \overline M$, where $\overline M$ is the
manifold $M$ endowed with the symplectic form $-\omega$, and $\varphi^t$ 
is the flow of $H$ on $M$. 

Our first result is

\begin{Lem}\label{3.2}
$$FH^*(H;a,b) \simeq FH^*(\Delta,({\rm{Id}} \times \varphi^1) \Delta;a,b).
$$
\end{Lem}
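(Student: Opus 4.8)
The plan is to exhibit a fiberwise diffeomorphism between the two Floer-type problems, identifying generators with generators and connecting trajectories with connecting trajectories, so that the chain complexes are literally isomorphic (hence so are the cohomologies, filtered by the action).

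\textbf{The reduction to the diagonal.} First I would recall the standard correspondence between Hamiltonian chords and Lagrangian intersection points. A time-one orbit $\gamma$ of $X_H$ on $M$ corresponds to the point $(\gamma(0),\gamma(1)) \in \Delta \cap ({\rm Id} \times \varphi^1)\Delta$: indeed $(\gamma(0),\gamma(1)) \in \Delta$ means $\gamma(0)=\gamma(1)$ (a closed orbit, i.e.\ a generator of $FH^*(H)$ in the Hamiltonian-Floer sense used here via $A_H$), while $({\rm Id}\times\varphi^1)\Delta = \{(x,\varphi^1(x))\}$, so the pair lies on it iff $\gamma(1)=\varphi^1(\gamma(0))$, which is automatic. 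So the generators match. To make the action functional $A_H$ on ${\cal P}(M)$ (loops) correspond to the relative action $A$ on paths in $M\times\overline M$ from $\Delta$ to $({\rm Id}\times\varphi^1)\Delta$, I would use the folding map $w(s,t) \mapsto (w(s,t/2), w(s,1-t/2))$ or rather its inverse: a strip $v:\mathbb R\times[0,1]\to M\times\overline M$ with boundary on $\Delta$ and $({\rm Id}\times\varphi^1)\Delta$ unfolds to a map $u:\mathbb R\times[0,1]\to M$ by $v(s,t)=(u(s, \tfrac{1+t}{2}), u(s,\tfrac{1-t}{2}))$ — the $\Delta$-boundary condition at $t=0$ forces $u(s,\tfrac12)$ to be well-defined, and the $({\rm Id}\times\varphi^1)\Delta$-boundary at $t=1$ forces $u(s,1)=\varphi^1(u(s,0))$, i.e.\ after precomposing with $\varphi^t$ it becomes a genuine periodic-type boundary condition. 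Under this folding, $-\omega$ on the second factor is exactly what is needed so that $\int v^*(\omega\oplus(-\omega))$ equals $\int u^*\omega$, matching the areas, hence the action filtrations $[a,b]$ on both sides.

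\textbf{Matching the equations.} Next I would check that the perturbed holomorphic curve equations correspond. On $M\times\overline M$ one uses the split almost complex structure $J\oplus(-J)$ (compatible with $\omega\oplus(-\omega)$) and the Hamiltonian that generates $({\rm Id}\times\varphi^1)$, which can be taken to be $(0)\oplus H$ or, after a homotopy of the sort allowed in Lemma~\ref{2.2} (deforming $H$ keeping its time-one flow), a Hamiltonian supported where it is convenient; the Floer equation for $v$ with this data unfolds precisely to the equation $\overline\partial u = -\nabla H(t,u,\xi(s))$ together with the $\xi$-equation, because the $-J$ on the $\overline M$ factor compensates the orientation-reversal in the folding of the $t$-variable. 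The upshot is a bijection between solution spaces, compatible with the $\mathbb R$-actions and with the index (Conley--Zehnder index on $M$ equals Maslov index of the pair of Lagrangians in $M\times\overline M$, which is the content of the grading convention), and sending broken trajectories to broken trajectories. Since both complexes are defined (the necessary compactness/transversality being the verbatim translations of \cite{F1} already invoked in the text), the induced map on cohomology is an isomorphism respecting the $[a,b]$ filtration.

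\textbf{Main obstacle.} The routine parts are the folding bookkeeping and the sign/area check; the genuinely delicate point is transversality and the choice of almost complex structure on $M\times\overline M$. The naive split structure $J\oplus(-J)$ is typically not regular for the doubled problem, so I would either (i) argue that regularity can be achieved by a domain-dependent or $\Delta$-adapted perturbation that still unfolds correctly — which requires checking that the class of allowed perturbations on $M\times\overline M$ pulls back under folding to the class of allowed $(H,\xi)$-data on $M$ — or (ii) invoke the standard fact (as in the text's appeal to \cite{F1}) that the Floer cohomology $FH^*(\Delta,({\rm Id}\times\varphi^1)\Delta)$ is independent of the compatible $J$ and of the Hamiltonian within its flow class, compute it with the split data where the unfolding is transparent, and then transport the answer. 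I expect the cleanest write-up uses (ii): first establish the chain isomorphism for the convenient split data by the folding argument above, then quote invariance on the $M\times\overline M$ side to conclude for arbitrary admissible data, exactly parallel to how Lemma~\ref{2.2} was proved.
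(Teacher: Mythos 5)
Your proposal is correct and follows essentially the same route as the paper: identify generators of both complexes with fixed points of $\varphi^1$, use an unfolding/folding correspondence (in the paper: glue the two components $u_1,u_2$ of a strip in $M\times\overline M$ along the common $\Delta$-boundary to obtain a cylinder in $M$; in yours: the reparametrization $v(s,t)=(u(s,\tfrac{1+t}{2}),u(s,\tfrac{1-t}{2}))$), and appeal to Lemma~\ref{2.2} and the invariance of Floer cohomology under deformations fixing the time-one map to move between the convenient split data and the general case. Two cosmetic slips: the $(\mathrm{Id}\times\varphi^1)\Delta$-condition at $t=1$ actually gives $u(s,0)=\varphi^1(u(s,1))$ rather than $u(s,1)=\varphi^1(u(s,0))$ (same conclusion after conjugating by $\varphi^t$), and the $\xi$-variable you mention belongs to the earlier generating-function discussion and plays no role in this lemma.
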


\begin{proof}
Remember that $FH^*(H,a,b)=FH^*(0,0,H,a,b)$.

We first point out that the cochain spaces associated to both sides are the same,
and generated by the fixed points of $\varphi^1$.

The connecting trajectories are, for the left hand side given by

$$
\overline \partial v=-\nabla H(t,v)
$$
$$
v:{\mathbb R} \times S^1 \rightarrow M
$$
while for the right hand side, we first notice that $FH^*(\Delta,({\rm{id}}
\times \varphi^1) \Delta ;a,b) \simeq FH^*(\Delta,({\rm{Id}} \times
\varphi^1) \Delta, 0;a,b) \simeq FH^*(\Delta, \Delta,0 \oplus H;a,b)$
(Note: by $K=0 \oplus H$ we mean the Hamiltonian on $M \times \overline M$
 defined by $K(t,z_1,z_2)=H(t,z_2$)).

The second isomorphism follows from lemma \ref{2.2}.

Now the coboundary map for this cohomology is obtained by counting
solutions of
$$
\overline \partial u= - \nabla K
$$
where $u=(u_1,u_2) : {\mathbb R}\times S^1 \Leftrightarrow M\times 
\overline M$ and $\overline \partial u_1=0;
\quad \overline \partial u_2=-\nabla H(t,u_2)$.

Moreover, since $t \rightarrow u(s,t)$ connects $\Delta$ to itself, we
have $u_1(s,0)=u_2(s,0)$ and $u_1(s,1)=u_2(s,1)$.

Therefore we may glue together $u_1$ and $u_2$ to obtain a map $\widetilde v :
S^1 \times {\Bbb{R}} \rightarrow M$ such that 
$$
\overline \partial \widetilde v =- \nabla \widetilde H(t,\widetilde v)
$$

\begin{eqnarray*}
{\rm{where}} \quad \widetilde H(t,u) &=& 0 \quad  {\text{for}}\quad  t \quad
{\rm{in}}\quad \lbrack 0,1\rbrack\\
 &=&H(t,u) \quad {\rm{for}} \quad  t\quad  {\text {in}} \quad \lbrack 1,2
\rbrack
\end{eqnarray*}
(the circle being identified with ${\Bbb{R}} / 2{\mathbb Z} )$.
We shall make the simplifying assumption that $H(t,z)=0$ for $t$ close 
to $0$ or $1$, so that $\tilde H$ is continuous in $t$. This is of 
course not a restriction on the time one map. 

But the time $2$ map of $\widetilde H$ coincides with the time $1$ map
for $H$, hence we may continuously deform one equation into the other,
and the two cohomologies are isomorphic.
(Remember that if we have a family of Hamiltonians depending
continuously on some parameter, but having the same time $1$ map, then
the corresponding Floer cohomologies are alos independnt from the 
parameter).

\end{proof}

Assume $\varphi^1$ is equal to $\psi^r$. We denote by $\Gamma_\varphi$
the graph of $\varphi$ (i.e. $\Gamma_{\phi}= ({\rm{id}} \times \varphi) \Delta $), and
$\Gamma_\psi^{\rho,r} = \lbrace (z_1, \psi(z_2),z_2, \psi(z_3) \ldots ,
z_{k-1}, \psi(z_r), z_r, \psi(z_1)) \rbrace$ in $(M \times \overline M)^r$

Set $\Delta^r = \Delta \times \quad \times \Delta $ ($r$ times)

\begin{Lem}\label{3.3}. For $\varphi = \psi^r$ we have
$$
FH^*(\Delta, \Gamma_\varphi;a,b) \simeq FH^*(\Delta^r,
\Gamma_\psi^{\rho,r}; a,b)
$$
\end{Lem}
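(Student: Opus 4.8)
The strategy is to reduce the right-hand side, which lives on the $r$-fold product $(M\times\overline M)^r$, back to the left-hand side by the same ``gluing/folding'' trick used in Lemma~\ref{3.2}, carried out $r$ times. The key observation is that both complexes have the same generators: a generator of $FH^*(\Delta^r,\Gamma_\psi^{\rho,r};a,b)$ is a tuple $(z_1,\dots,z_r)$ with $\psi(z_i)=z_{i+1}$ (indices mod $r$), i.e. $z_{i+1}=\psi^i(z_1)$ and $\psi^r(z_1)=z_1$, hence $z_1$ is a fixed point of $\varphi=\psi^r$; conversely such a fixed point determines the whole tuple. So the bijection of generators is immediate, and one checks it preserves the action filtration (the action $A_{K^{\rho,r}}$ of the product Hamiltonian on such a tuple equals $A_H$ on the corresponding $\varphi$-orbit, up to the usual normalisation), which is what lets us keep track of the $[a,b]$ truncation.

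First I would, exactly as in Lemma~\ref{3.2}, use Lemma~\ref{2.2} to replace the pair $(\Delta^r,\Gamma_\psi^{\rho,r})$ by $(\Delta^r,\Delta^r,K^{\rho,r})$ for an appropriate Hamiltonian $K^{\rho,r}$ on $(M\times\overline M)^r$ built from $\psi$ (concretely, a Hamiltonian whose time-one flow realizes the ``shift by $\psi$'' identification on each factor); again one should arrange, by deforming the Hamiltonian without changing its time-one map, that it vanishes for $t$ near the endpoints so that the glued-up object is continuous in time. Then a connecting trajectory for this complex is a tuple $u=(u^1_1,u^1_2,\dots,u^r_1,u^r_2):\mathbb R\times[0,1]\to (M\times\overline M)^r$ with $\overline\partial u^i_1=0$, $\overline\partial u^i_2=-\nabla H_i(t,u^i_2)$ for the appropriate piece $H_i$ of $\psi$, and with the boundary-on-$\Delta^r$ conditions forcing $u^i_1(s,0)=u^i_2(s,0)$ and $u^i_1(s,1)=u^i_2(s,1)$ on each factor, together with the matching conditions coming from the cyclic form of $\Gamma_\psi^{\rho,r}$ that glue the $2$-slot of factor $i$ to the $1$-slot of factor $i+1$.

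Next I would glue: all these compatibility conditions say precisely that the $2r$ strips $u^i_1,u^i_2$ fit together along their common boundary arcs to form a single map $\widetilde v:S^1\times\mathbb R\to M$, where the circle is now $\mathbb R/2r\mathbb Z$ and $\widetilde v$ satisfies $\overline\partial\widetilde v=-\nabla\widetilde H(t,\widetilde v)$ with $\widetilde H$ equal to $0$ on the ``$u^i_1$'' sub-intervals and to the relevant time-shifted copy of $H$ (i.e. the Hamiltonian of $\psi$) on the ``$u^i_2$'' sub-intervals. The time-$2r$ map of $\widetilde H$ is by construction $\psi^r=\varphi^1$, so — invoking once more the invariance of Floer cohomology under deformations of the Hamiltonian that preserve the time-one map — the complex $(\Delta^r,\Delta^r,K^{\rho,r})$ is isomorphic to $(\Delta,\Delta,0\oplus H)\simeq FH^*(\Delta,\Gamma_\varphi;a,b)$, which is the left-hand side. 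Throughout, one checks the action values on generators are unchanged, so the $[a,b]$-truncated statement follows.

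\textbf{Main obstacle.} The routine parts (bijection of generators, the verbatim transcription of Floer's analytic package, the deformation invariance) are exactly as before; the delicate bookkeeping is in the gluing step — verifying that the cyclic boundary conditions defining $\Gamma_\psi^{\rho,r}$ match up the $2r$ half-strips \emph{consistently} around the circle $\mathbb R/2r\mathbb Z$, with the orientations coming from $\overline M$ in the even slots handled correctly, and that the resulting $\widetilde H$ genuinely has time-$2r$ flow equal to $\varphi^1$ with the action normalisation matching. Getting the signs and the index shift right in this folding is the real content; the rest is formal.
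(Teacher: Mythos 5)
Your proposal is correct and follows essentially the same route as the paper: identify the generators (fixed points of $\varphi$ correspond bijectively to tuples $(z_1,\dots,z_r)$ with $z_{i}=\psi^{r+1-i}(z_1)$), then glue the $2r$ half-strips around $\mathbb{R}/2r\mathbb{Z}$ into a single cylinder map whose Hamiltonian has time-$2r$ flow equal to $\varphi$, and conclude by deformation invariance of Floer cohomology under changes of Hamiltonian preserving the time-one map. The paper implements the ``unfolding of $\overline M$'' slightly more directly by the substitution $w_j(s,t)=\psi_t(v_j(s,t))$ rather than passing first through $(\Delta^r,\Delta^r,K^{\rho,r})$ via Lemma~\ref{2.2}, but this is the same move; your identification of the gluing bookkeeping as the real content of the argument is also accurate.
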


\begin{proof} Clearly, we may identify $\Gamma_\varphi \cap
\Delta$ and $\Delta^r \cap \Gamma_\psi^{\rho,r}$, since a point in this intersection is given by $(z_1, \psi(z_2),
\ldots z_r, \psi(z_1))= (z_1, z_1, \ldots z_r, z_r)$ that is $z_1=
\psi(z_2); z_2= \psi(z_3), \ldots , z_r=\psi(z_1)$
or else

$$
z_1=\psi^r(z_1),z_i=\psi^{r+1-i}(z_1) \quad{\text{for}}\quad i\geq 2
$$

Note that there is a ${\Bbb{Z}}/r$ action on the set of such points,
induced by $z \rightarrow \psi(z)$ on $\Gamma_\varphi \cap \Delta$ and
by the shift $(z_j) \to (z_{j-1})$ on $\Gamma_\psi^{\rho,r} \cap
\Delta^r$, and these two actions obviously coincide.
We shall not mention this point in the proof, but all our results
hold for  ${\Bbb{Z}}/r$ equivariant cohomology with any coefficient ring, and
this eventually allows us to recover the  $S^1$ equivariant cohomology (with rational 
coefficients), due to the Lemma 
in  appendix 2 of \cite{V-Loopspaces}.

Now let us compare the trajectories.

For the second cohomology $(u_j,v_j):{\Bbb{R}} \times \lbrack 0,1 \rbrack
\rightarrow M$ satisfying
$$
\left \lbrace
\begin{array}{l}
\overline \partial u_j = \overline \partial v_j = 0\\
u_j(s,0) = v_j(s,0) = 0\\
u_j(s,1) = \psi(v_j(s,1))
\end{array}
\right.
$$If we set $w_j(s,t) = \psi_t(v_j(s,t))$ so that 

$$
\left \lbrace
\begin{array}{l}
\overline \partial w_j = \nabla K(s,t) \quad \overline \partial
u_j=0\\ 
u_j(s,0) = w_j(s,0)\\
u_j(s,1) = w_{j+1}(s,1)
\end{array}
\right.
$$
We may now glue together the $u_j$ and $w_j$ as in figure \ref{fig9b} to
get a map
$$
u:{\mathbb{R}} \times {\mathbb R}/2r {\mathbb Z} \rightarrow M
$$
such that 
\begin{eqnarray*}
u(s,t) &=&u_j(s,t-2j) \quad \quad \quad {\text{ for }}\quad 2j \leq t \leq
2j+1\\ u(s,t)&=&w_j(s,t-2j-1)\quad {\text{ for }}\quad 2j+1 \leq t <
2j+2
\end{eqnarray*}
It satisfies $\overline \partial u=\nabla F(u)$ where 
\begin{eqnarray*}
F(t,u) & = & K(t-2j,u) \quad {\text{ for }}\quad 2j\leq t \leq 2j+1\\
& = &0\qquad \qquad {\text{otherwise}}
\end{eqnarray*}

Thus the time $2k$ map of $X_F$ is equal to $\psi^k= \varphi$.

We thus identified the Floer trajectories defining $FH^*(\Delta,
\Gamma_\psi^{\rho,r};a,b)$ with those defining 
$$
FH^*(H;a,b) \simeq FH^*(\Delta,\Gamma_\varphi;a,b)
$$
This concludes our proof.

\end{proof}

\begin{figure}
  	\centering
	\includegraphics*[scale=.8]{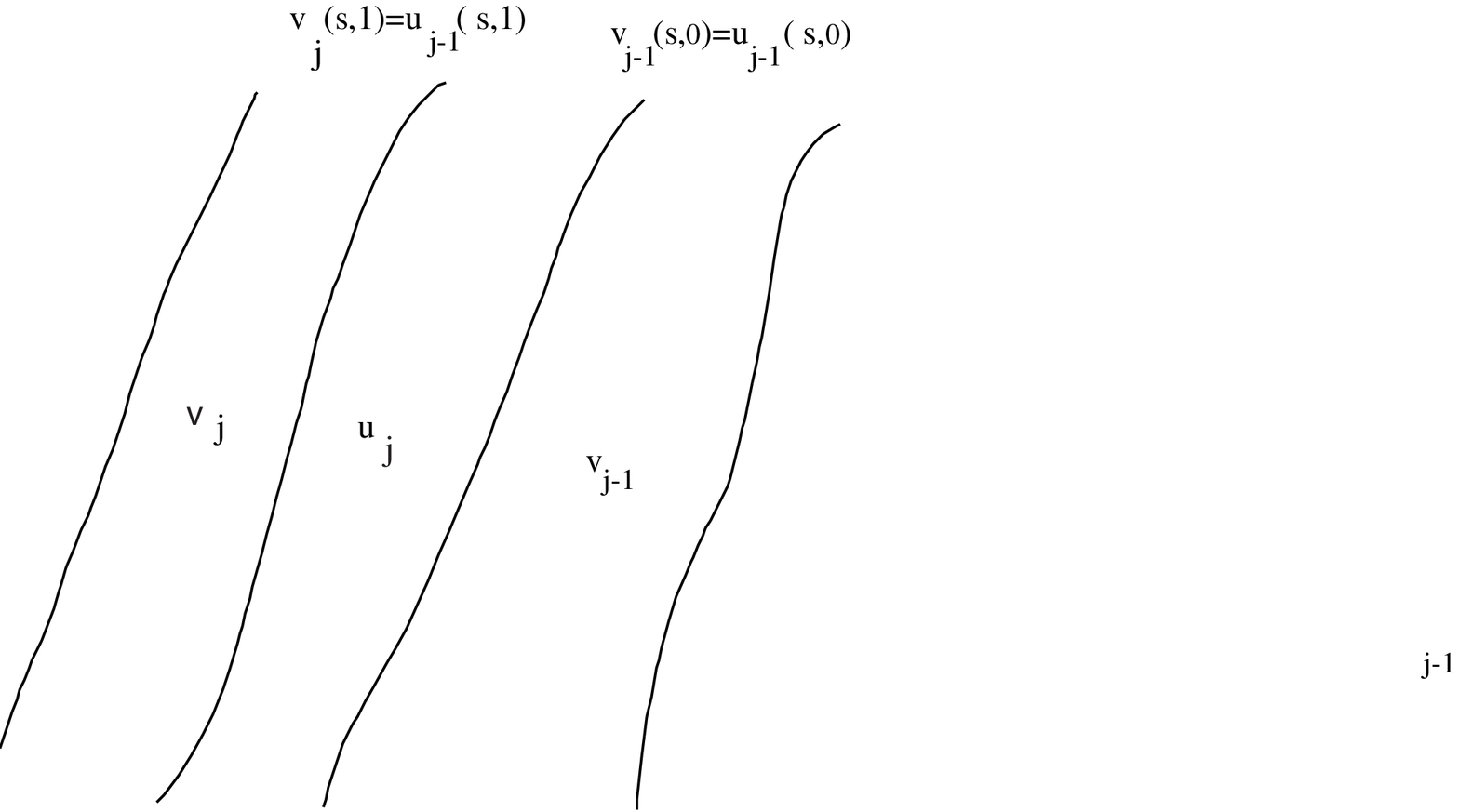}
	\caption{The glueing of the maps $u_{j}$ and $v_{j}$}
 \label{fig9b}
\end{figure}

Let us now prove  theorem \ref{3.1}. Let $H$ be a Hamiltonian
equal to $H(t,q,p)=c \vert p \vert$ for $\vert p \vert$ large, where $c$ is some
constant (that differs from the length of a closed geodesic, but will
eventually become large), and $\varphi^t$ its flow.

For some integer $r$ (that will eventually become large) set $\psi =
\varphi^{1/r}$, so that $\psi^r= \varphi^1= \varphi$.

Then according to lemmata \ref{3.2} and \ref{3.3}, we have that
$$
FH^*(H;a,b)\simeq FH^*(\Delta^r, \Gamma_\psi^{\rho,r};a,b)
$$
Now we may assume that $H(t,q,p)=h(\vert p \vert)$ with $h$ increasing
and convex, and $k$ is the Legendre dual of $h$. Let $q(t)$ be a loop in $M$,
and $E(q)=\int_{S^1} k(\vert {\dot  q} \vert) dt$. Then we
claim
$$
FH^*(H;a,b) \simeq H^*(E^b, E^a)
$$
We know from \cite{V-Loopspaces} that there is a subset ${\cal U}_{r,
\varepsilon}$ in $\Delta^r$, defined by
$$
{\cal U}_{r, \varepsilon} = \lbrace (q_j, P_j)_{j\in \mathbb Z / r \mathbb Z}
\vert d(q_j, q_{j+1}) \leq \varepsilon / 2 \rbrace
$$
for $\varepsilon$ small enough, independent from $r$, such that
$\Gamma_\psi^{\rho,r}$ (that is $\Gamma_\Phi$ in the notation of
\cite{V-Loopspaces}) is the graph of $dS_\Phi$ over ${\cal
U}_{r,\varepsilon}$.

From proposition 1.8 in \cite{V-Loopspaces}, we have that there is a
pseudogradient vector field $\xi_\Phi$ of $S_\Phi$ such that, denoting by
$HI^*$ the cohomological Conley index (see \cite{Conley}), we have, 
for some integer $d$, 
$$
HI^*(S_\Phi^b , S_\Phi^a; \xi_\Phi) \simeq H^{*-d}(E_r^b, E_r^a)
$$
where

$$
E_r^b= \lbrace q=(q_j) \in N^r \vert  d(q_j,q_{j+1}) \leq \varepsilon / 2
\quad {\text{and}}\quad 
 E_r(q)<b \}
$$
(Note: in \cite{V-Loopspaces}, $E_r^b$ is denoted $\Lambda_{r,
\varepsilon}^b$)

$(E_r(q) = \displaystyle {\sup}_ P S_\Phi(q,P)$ and $d$ is some
normalizing constant (equal in fact to $r n$).

On the other hand $E_r^b \simeq E^b$ where $E^b= \lbrace q \in \Lambda M
\vert E(q) \leq b \rbrace$ (see e.g. \cite{Mi}), so we only
need:

\begin{Lem}
$$
FH^*(\Delta^r, \Gamma_\psi^{\rho,r}; a,b) \simeq HI^{*+d}(S_\Phi^b,
S_\Phi^a, \xi_\Phi)
$$
\end{Lem}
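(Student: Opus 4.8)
The plan is to identify both sides with the Conley-index cohomology of one and the same finite-dimensional functional, namely $S_\Phi$, equipped with a suitable pseudogradient. The generators on the Floer side of $FH^*(\Delta^r,\Gamma_\psi^{\rho,r};a,b)$ are the points of $\Gamma_\psi^{\rho,r}\cap\Delta^r$ lying in the action window $[a,b]$; by the discussion in Section \ref{II.1} together with the fact (quoted from \cite{V-Loopspaces}) that $\Gamma_\psi^{\rho,r}$ is the graph of $dS_\Phi$ over ${\cal U}_{r,\varepsilon}$, these are precisely the critical points of $S_\Phi$, and the action $A_{0\oplus\cdots}$ of such a point agrees, up to the normalizing shift $d=rn$, with the value of $S_\Phi$. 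So the underlying graded vector spaces already match after the shift by $d$; the content of the lemma is that the two differentials — the one counting Floer strips for the broken Hamiltonian $F$ built in Lemma \ref{3.3}, and the one counting flow lines of $\xi_\Phi$ appearing in $HI^*(S_\Phi^b,S_\Phi^a;\xi_\Phi)$ — compute the same cohomology.

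First I would reduce the Floer problem to a gradient problem exactly as in the second lemma of Section \ref{II.1}: the connecting trajectories for $FH^*(\Delta^r,\Gamma_\psi^{\rho,r};a,b)$, after the substitution $w_j=\psi_t(v_j)$, are pairs $(u_j,\xi_j)$ with $\bar\partial u_j$ horizontal (valued in the fiber-gradient of the generating data) and $\dot\xi_j$ given by the fiber-derivative integral. Applying the plurisubharmonicity of $f(q,p)=|p|$ and the maximum principle, using that the boundary conditions at $t=0,1$ force $u_j$ onto the zero section at the seams, one sees that the $p$-component of each $u_j$ vanishes identically, so $u_j$ depends only on $s$. What remains is a gradient trajectory in the finite-dimensional space $N^r\times\mathbb R^{(\cdots)}$ for the function whose critical points are the critical points of $S_\Phi$ — i.e., a negative gradient flow line of $S_\Phi$ for the metric induced by the chosen $J$. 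This exhibits the Floer complex as the Morse(-Bott/Conley) complex of $S_\Phi$ for that particular pseudogradient.

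Next I would invoke invariance of the Conley-index cohomology under change of pseudogradient: the field $\xi_\Phi$ of Proposition 1.8 in \cite{V-Loopspaces} and the gradient field produced above are both pseudogradients for $S_\Phi$ adapted to the isolating block coming from ${\cal U}_{r,\varepsilon}$ (this is why the sublevel sets $S_\Phi^\lambda$, rather than honest sublevel sets, are the right objects — they encode the block), and hence define isomorphic cohomological Conley indices $HI^*(S_\Phi^b,S_\Phi^a;\cdot)$. Combined with the index-shift normalization $d=rn$ this yields $FH^*(\Delta^r,\Gamma_\psi^{\rho,r};a,b)\simeq HI^{*+d}(S_\Phi^b,S_\Phi^a;\xi_\Phi)$.

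The main obstacle is the analytic confinement step: one must check that the Floer strips for the glued Hamiltonian $F$ of Lemma \ref{3.3} genuinely stay in the region where the maximum-principle argument applies (uniform $C^0$-bounds in the $p$-direction, controlled by the convexity of $h$ and the pseudoconvex foliation at infinity), and, more delicately, that the reduction to the zero section is compatible with the specific almost complex structure for which $\xi_\Phi$ is a pseudogradient — or, failing an exact match of metrics, that one can interpolate between the two pseudogradients through admissible ones without creating critical points on the boundary of the block or letting the action levels $a,b$ be crossed. A secondary technical point is handling the Bott-degenerate situation (the critical set of $S_\Phi$ need not be nondegenerate for generic $\psi$), which is exactly what forces the use of the Conley index rather than a naive Morse complex; one should either perturb $S_\Phi$ within its class or appeal to the Conley-index continuation property to absorb this. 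Everything else is a verbatim translation of the arguments already used in Section \ref{II.1} and in \cite{V-Loopspaces}.
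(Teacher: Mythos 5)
Your overall strategy — reduce the Floer complex to a gradient problem for $S_\Phi$, then change pseudogradient — matches the paper's, but there is a genuine gap, and you have misidentified where the difficulty lies. You flag the ``analytic confinement step'' as a concern and describe it as a problem of ``uniform $C^0$-bounds in the $p$-direction, controlled by the convexity of $h$ and the pseudoconvex foliation at infinity.'' That part is in fact the easy half: the serious confinement is in the $q$-directions, namely that the Floer strips must stay inside $T^*{\cal U}_{r,\varepsilon_0}$, i.e.\ the consecutive points $q_j$ of the broken loop must stay $\varepsilon$-close along the whole trajectory. Pseudoconvexity at infinity says nothing about this, and your maximum-principle reduction (plurisubharmonicity of $|p|$) presupposes it: once a strip leaves $T^*{\cal U}_{r,\varepsilon_0}$, the generating-function picture (and with it the ``graph of $dS_\Phi$'' description of $\Gamma_\psi^{\rho,r}$) breaks down, and there is nothing to reduce to.

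The paper handles this with a monotonicity argument that is the real content of the lemma. One first arranges $h$ so that all critical values of $S_\Phi$ lie in $[-\delta,0]$, hence every relevant Floer strip has area $<2\delta$. Then one shows that any $J$-holomorphic piece escaping ${\cal U}_{r,\varepsilon_0/2}$ and sitting in the annular region $T^*({\cal U}_{r,\varepsilon_0}\setminus{\cal U}_{r,\varepsilon_0/2})$, with boundary on $\Delta^r\cup\Gamma_\psi^{\rho,r}$, has area bounded below by a quantity depending only on the sectional curvature and injectivity radius of the metric associated to $(\omega,J)$, via the minimal-surface estimate in the Appendix (together with pseudoconvex tubular neighbourhoods of the Lagrangians and a symmetrization trick). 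Crucially, this lower bound is \emph{uniform in $r$}, because curvature bounds on $(T^*N\times\overline{T^*N})^r$ for the product metric do not degenerate with $r$. Choosing $\delta$ below this $r$-independent bound gives the confinement. Your proposal contains none of this; without it the reduction to a finite-dimensional gradient flow cannot be carried out. A secondary, lighter omission: the maximum-principle reduction to $\nabla S_\Phi$ requires the specific time-dependent $J_1=(\phi_t)_*J_0$ (not a product structure), and one then needs the interpolating family $J_\lambda$ to keep $T^*_{\partial{\cal U}}{\cal U}$ pseudoconvex so that the confinement persists along the homotopy; you allude to a change of metric/pseudogradient but do not give this construction.
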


\begin{proof} We would like to find an almost complex structure $J=J_0$, on $(T^*N
\times \overline{T^*N})^r$ such that the holomorphic maps corresponding
to Floer trajectories are also in one to one correspondence with bounded
trajectories of $\xi_\Phi$.

We first choose $h$ so that the critical values of $S_\Phi$ are in $\lbrack
- \delta, 0 \rbrack$. Indeed, we may impose that $h'(u)u-h(u)$ is in $\lbrack
- \delta, 0 \rbrack$ (see Figure \ref{fig10b}).

Then the Floer trajectories used to define the left hand side will have
area less than $2\delta$.

 \begin{figure}
	 \centering
	 \includegraphics*[scale=.45]{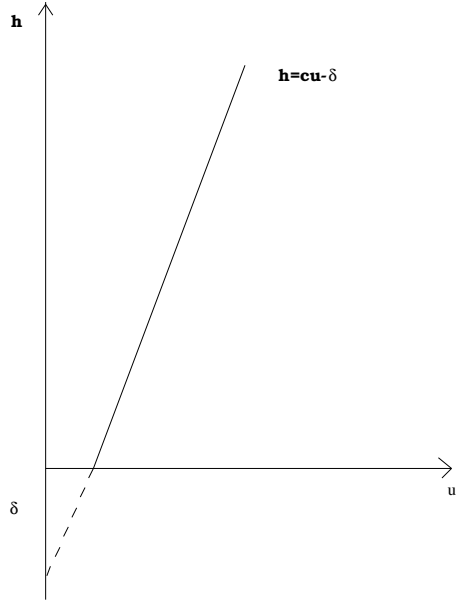}
	 \caption{The hamiltonian $h$}
	 \label{fig10b}
 \end{figure}

Now, let us show that for $\delta$ small enough, such a Floer trajectory
must stay inside $T^* {\cal U}_{r,\varepsilon_{0}}$.

Indeed, since all critical points are inside $T^*({\cal U}_{r,
\varepsilon_{0/2}}) $, we have, for
a Floer trajectory exiting from $T^* {\cal U}_{r, \varepsilon_{0}/2}$, that it
defines a $J$-holomorphic curve $\Sigma$ such that 

\begin{itemize}
\item [(i)] $$
\partial \Sigma\subset \Delta^r \cup\Gamma_\Phi^{\rho,r}.$$
\item[(ii)]
$$\Sigma \quad\text{is in}\quad  T^*({\cal U}_{r,\varepsilon_{0}}- {\cal
U}_{r,\varepsilon_{0/2}}) .
$$
\end{itemize}

Now we are in the following abstract situation:

Let $V_0 \subset V_1$ be convex  sets, $L_0, L_1$ be disjoint Lagrange 
submanifolds in $T^*(V_1 - V_0)$.
We set $S_t = \partial V_t =\{x\in V_1 \mid (1-t) d(x,V_0)= t d(x,V_1)\}$, and
$B_t = \partial T^*V_t = T^*_{S_{t}}V$.

Let  $\Sigma $ be  a holomorphic curve with boundary in $L_0 \cup  L_1$.
Then we claim that the area of $\Sigma$ is bounded from below. 

First let us consider the case of a pseudo-holomorphic curve, $\Sigma$ closed
in $T^*(V_1 - V_0)$ (and in particular with boundary in $T^*_{S_0}V \cup T^*_{S_1}V
$ ), such that $\partial \Sigma \cap T^*_{S_0}V = \partial \Sigma \cap B_0$ is non-empty.

Then $B_{1/2}$ must intersect $\Sigma$, since otherwise, as $\partial \Sigma \cap 
B_0 \neq \emptyset$, there would be an interior tangency between some $B_t$ (for $0<t<1/2$)
and $\Sigma$, and this is impossible by the pseudoconvexity of $B_t$.

Thus for $\alpha=\frac{1}{2} d(V_0,V_1)$, and $x_0 \in B_{1/2} \cap \Sigma $ 
we have that $B(x_0,\alpha)$ is in $T^*(V_1-V_0)$, and thus, we have

$$\area (\Sigma) \geq \area (\Sigma \cap B(x_0,\alpha) 
\geq \pi \alpha ^2 \exp (\eta(c\alpha))$$

where $c$ is an upper bound for the sectional curvatures of the metric 
$g_0$ associated to $\omega$ and $J$ (see Appendix).

Now consider the case where $\Sigma$ has a boundary contained in the 
union of the two Lagrange submanifolds $L_0 $, $L_1$.

Let $U_0, U_1$ be tubular neighbourhoods of $L_0,L_1$ respectively, and assume that
they are disjoint, symmetric (i.e. there is an anti-holomorphic diffeomorphism of
$U_{i}$, fixing $L_i$), and pseudoconvex. This can be easily achieved, through a 
perturbation of $J$ near the $L_i$.

Consider now $\partial \Sigma \cap L_i =\gamma _i$. Then  either $\gamma _0$ and
$\gamma _1$ are both contained inside  $B_{1/2}$ or one of them is not.

In the first case, consider $\Sigma \cap T^*(V_1-V_{1/2})$. Then this intersection
does not have a boundary in $T^*(V_1-V_{1/2})$, except on $B_{1/2} \cup B_1$, and
we thus have again a lower bound on the area of $\Sigma$ as in the case of a
closed curve, except that $\alpha$ is to be replaced by $\alpha /2$.

In the second case, assume for instance that $B_{1/2}$ intersects $\gamma _0$.

Let us then consider the symetrization of $\Sigma$ inside $U_0$, where 
$J$ is integrable, (see for instance \cite{Sibony}). 
This will be a closed curve  $\widehat \Sigma$ in $U_0$, that has a
point $x_0$ in $B_{1/2}$, and we also have a ball $B(x_0,\alpha)$ in $U_0\cap
T^*(V_1-V_0)$ for $\alpha \leq \inf \{ \frac{1}{2} d(V_0,V_1), \frac{1}{2} d(L_0,\partial
U_0)\}$, and again, we get a lower estimate of the area of $\widehat \Sigma$.
Since $\area (\Sigma) \geq {1/2} \area (\widehat\Sigma)$, we also get an estimate on
the area of $\Sigma$.

This proves our abstract statement. We now claim that we are in the above
framework, with $\alpha \simeq \varepsilon _0/4$.

Indeed, the diameter of  $T^*({\cal U} _{r,\varepsilon}-{\cal U}
_{r,\varepsilon/2})$
is $\varepsilon/2$, and we have to show that $\Delta ^r$ has a pseudoconvex
symmetric neighbourhood
of radius $\varepsilon/4$.

But we will show that 
$$\Gamma_\Phi^{\rho,r} \cap T^*({\cal U}_{r,\varepsilon_{0}}- {\cal
U}_{r,\varepsilon_{0}/2}) \cap \lbrace \vert X_j \vert \leq \varepsilon_{0}
/ 4\vert Y_j \vert \leq \varepsilon_{0}/4 \rbrace = \emptyset.\leqno(ii)
$$

This implies that $\lbrace \vert X_j \vert \leq \varepsilon / 4\vert Y_j 
\vert \leq \varepsilon/4 \rbrace $  is a tubular
neighbourhood of the zero section, is disjoint from $\Gamma_\Phi^{\rho,r}$
in the region we are considering. Thus we have our $U_0$, with radius
$\varepsilon/4$. A similar fact would hold for $\Gamma_\Phi^{\rho,r}$.

Let us now prove our last claim.

Indeed we only  have to show that  $\vert  
\frac{\partial  S_\Phi}{\partial P_j}\vert  \geq \varepsilon /4$ for $ (q, P)$ in $ {\cal U}_{r,\varepsilon _{0}} - {\cal 
U}_{r,\varepsilon _{0}/2} .$

But  ${\partial S_{\Phi }\over \partial P_{j}} \simeq  q_{j+1} - 
q_{j} - \frac{1}{ r} \frac{\partial H}{\partial p} + \eta (q_{j+1} - 
q_{j}, P_{j})$ (see \cite{V-Loopspaces}, proof of 1.2)   where  $\eta 
(0,P) = d\eta (0,P) = 0$, and since for some  $j$,  $d(q_{j+1}, q_{j}) 
\geq  \frac{\varepsilon}{ 2}$, we have
 $\vert  X_{j}\vert  = \vert \frac{\partial S}{\partial 
P_{j}}\vert  \geq  \frac{\varepsilon} {2} - \frac{C}{r} \geq  
\frac{\varepsilon _{0}}{4}$ for $ r$ large enough.

It is particularly important to notice that our lower bounds are independent from
$r$, since they only depend on an upper bound for the sectional curvatures
of the metric, and this quantity stays bounded as $r$ goes to infinity.
\end{proof}

If we choose   $\delta  < \frac{1}{4}  \varepsilon $ we get that 
all Floer trajectories for  $J_{0}$ must stay in  
$T^{*} {\cal U}_{r,\varepsilon _{0}}$.

	We claim that

(i) for a suitable choice of  $J_{1}$, the  $J_{1}$ holomorphic 
curves defining the Floer cohomology are in one to one correspondence 
with trajectories of  $S _{\Phi }$.

(ii) there is a family  $J_{\lambda }$ of almost complex structures 
connecting  $J_{0}$ to  $J_{1}$, taming  $\omega $ and making  
$T^{*}_{\partial {\cal U}_{r,\varepsilon _{0}}} {\cal 
U}_{r,\varepsilon _{0}}$ pseudo convex.   
 Note that our almost complex structures will be time dependent, but this
is not important.

\begin{Lem}

	Let  $U$ be a manifold with boundary, and  $f : U \to {\Bbb{R}}$ 
be a smooth function. Then for  $L$ =graph$(df)$,  $\phi_{t}$ 
the Hamiltonian flow  $(q,p) \to  (q,p+t df(q))$,
	set  $J(t,u) = (\phi_{t})_{*} J_{0}(u)$.

	Then, there is a one to one correspondence between solutions of
 
$$
\left \lbrace
\begin{array}{l}
\dot q(t) = - \nabla  f(q(t))\\
\displaystyle\lim_{s \to \pm \infty}q(t) = x_{\pm }\end{array}
\right.
$$

and

$$
\left \lbrace
\begin{array}{l}
\overline \partial _{J} v = 0\\
      v(s,0) \in  0_{N}\quad \quad \quad , \quad \quad
  v(s,1) \in  L\\
\displaystyle\lim_{s \to \pm \infty} v(s,t) = x_{\pm }
\end{array}
\right.
$$
\end{Lem}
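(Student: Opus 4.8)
The plan is to \emph{untwist} the twisted Cauchy--Riemann equation by the fibrewise translation $\phi_t$. Given a $J$-holomorphic strip $v$ satisfying the second system, set
$$
w(s,t)=\phi_t^{-1}(v(s,t))=\phi_{-t}(v(s,t)).
$$
Since $\phi_0=\mathrm{id}$ we still have $w(s,0)=v(s,0)\in 0_U$, and since $\phi_{-1}(q,p)=(q,p-df(q))$ carries $\mathrm{graph}(df)=L$ onto the zero section, also $w(s,1)=\phi_{-1}(v(s,1))\in 0_U$. Thus $w$ is a strip with \emph{both} boundary arcs on the zero section $0_U$, and I claim it solves the Floer equation on $T^*U$ for the fibrewise Hamiltonian $G(q,p)=f(q)$ with the \emph{untwisted} structure $J_0$.

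To check this, differentiate $v=\phi_t(w)$: one gets $\partial_s v=d\phi_t(\partial_s w)$ and $\partial_t v=d\phi_t(\partial_t w)+Y(v)$, where $Y$ is the time-independent field generating $\phi_t$, namely the vertical field $Y(q,p)=df(q)\,\partial_p$, which (up to the sign convention for $\omega$) is the Hamiltonian vector field $X_G$ of $G(q,p)=f(q)$. Since $J(t,u)=(\phi_t)_*J_0(u)$ means exactly that $\phi_t$ is a biholomorphism $(T^*U,J_0)\to(T^*U,J(t,\cdot\,))$, and since $d\phi_{-t}$ fixes vertical vectors of the form $df(q)\,\partial_p$, the equation $\overline\partial_J v=0$ pulls back to
$$
\partial_s w+J_0(w)\bigl(\partial_t w+X_G(w)\bigr)=0 ,
$$
i.e.\ the Floer equation $\overline\partial_{J_0}w=-\nabla G(w)$ for $G=f\circ\pi$, a Hamiltonian depending only on the base point. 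In particular the defect $\nabla G=J_0X_G$ is \emph{horizontal}, since $X_G$ is vertical.

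Now the maximum-principle argument of Section~\ref{II.1} (the lemma comparing $FH^*(O_N,O_N,S;a,b)$ with $H^*(S^b,S^a)$) applies verbatim: the function $(q,p)\mapsto|p|$ is $J_0$-plurisubharmonic, the defect $\nabla G$ is tangent to $\ker d|p|$, so $|p|\circ w$ is subharmonic on the strip; it vanishes on $\{t=0\}\cup\{t=1\}$ and tends to $0$ as $s\to\pm\infty$ (the limits $x_\pm$ lie in $0_U\cap L$, where $df$ vanishes, hence $\phi_t$ fixes them). Therefore $|p|\circ w\equiv 0$, i.e.\ $w(s,t)=(q(s,t),0)$; the vertical part of the equation then forces $\partial_t q\equiv 0$ and the horizontal part gives $\dot q=-\nabla f(q)$ with $q(\pm\infty)=x_\pm$. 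So $w$ is precisely the ($t$-independent lift of a) bounded negative gradient trajectory of $f$. Conversely, given such a trajectory $q(s)$, the strip $w(s,t)=(q(s),0)$ solves the $J_0$-Floer equation for $G$, and $v(s,t)=\phi_t(w(s,t))=(q(s),t\,df(q(s)))$ is then $J$-holomorphic with $v(s,0)\in 0_U$, $v(s,1)\in L$ and the right asymptotics. The assignments $q\mapsto w\mapsto v$ and $v\mapsto w\mapsto q$ are mutually inverse, which is the asserted one-to-one correspondence.

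The only genuine work is the bookkeeping in the second paragraph — checking that untwisting by $\phi_t$ produces exactly $J_0X_G$ for the fibrewise Hamiltonian, with the sign that reproduces $\dot q=-\nabla f(q)$ rather than $+\nabla f(q)$ — together with the remark that $U$ having boundary causes no difficulty here, since (as already arranged in this section) the strips under consideration stay in $T^*{\cal U}_{r,\varepsilon_0}$, away from $T^*_{\partial{\cal U}_{r,\varepsilon_0}}{\cal U}_{r,\varepsilon_0}$, so that $|p|$ and its maximum principle may be applied without boundary effects.
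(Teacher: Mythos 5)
Your proposal is correct and is essentially the paper's own argument run in the opposite direction: you conjugate by the fibrewise translation $\phi_t$ (setting $w=\phi_{-t}\circ v$ rather than $v=\phi_t\circ u$) to turn $\overline\partial_J v=0$ with boundary on $0_U\cup L$ into the Floer equation $\overline\partial_{J_0}w=-\nabla(f\circ\pi)(w)$ with both boundary arcs on the zero section, then invoke the $|p|$-maximum principle to force $p\equiv 0$ and read off the negative gradient flow of $f$. The only addition relative to the text is that you spell out the converse direction $q\mapsto w\mapsto v$ explicitly, which the paper leaves implicit in the phrase ``one to one correspondence.''
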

\begin{proof} 
	We have, setting  $v(s,t) = \phi_{t} u(s,t)$

\begin{eqnarray*}
\overline \partial _{J} v = d \phi_{t}(u) \frac{\partial}{
\partial s} u (s,t)\\
+ J(\frac{\partial}{ \partial t} \phi_{t}) (u) \\
= d \phi_{t}(u) \lbrack \frac{\partial}{\partial t} +d \phi_{t}(u)
\lbrack\frac{\partial}{ \partial S} +  d\phi^{-1}_{t}( \phi_{t}(u)) J
d\phi_{t}(u) \frac {\partial }{\partial  t} u\\
 + \nabla H(u)\rbrack\\
=d\phi_{t}(u)\lbrack \overline \partial _{J_{0}}u + \nabla H (u) \\
= d\phi_{t}(u) \lbrack \overline \partial _{J_{0}}u + \nabla 
H(u)\rbrack .
\end{eqnarray*}

Thus

$$
\left \lbrace
\begin{array}{l}
\overline \partial _{J_{0}} u = - \nabla H(u)\\
 u(S,0) \in  O_{U} \quad \quad \quad \quad 
 u(s,1) \in  O_{U} .
\end{array}
\right.
$$

Now since  $H(q,p) = f(q)$,  $\nabla H(q,p) = \nabla  f(q)$, and in 
local coordinates, we have  $dp_{j} \cdot  \nabla  f(q) = 0$.

Hence  $d(\vert p\vert) \cdot  \overline \partial _{J_{0}} u = 
\frac{1}{ \vert p \vert} \displaystyle \sum ^{n}_{j=1} dp_{j} \cdot  
\nabla  f(q) = 0$ and  $\vert p \circ  u\vert$ satisfies the maximum 
principle. But since  $u(s,0) \in  O_{U}$,  $u(s,1) \in  O_{U}$, we 
have  $p \circ  u \equiv  0$, hence  $u(s,t) = q(s,t),0)$. Now  
$\frac{\partial q}{ \partial t} + \frac{\partial p}{\partial S} = 0$ is 
the second half of  $\overline \partial _{J_{0}} u = - \nabla 
H(u)$, hence  $q(s,t) = q(t)$, and the first half becomes  
$\dot q(t) = - \nabla  f(q)$.

\end{proof}

Note that if  $J_{0}$ makes  $T_{\partial U}^{*}U$ pseudoconvex, the same holds
 for  
$J_{\lambda }$ the linear interpolation between   $J_{0}$ and  
$J_{1}$.

	From this lemma, and the previous arguments, we may conclude that
 $F H^{*}(\Delta ^{k}, \Gamma ^{\sigma ,k}_{\psi } ; a,b)$ is 
isomorphic to the cohomology of the Thom-Smale-Witten complex of  
$\nabla S_{\Phi }$ restricted to  $S^{b}_{\Phi } - S^{a}_{\Phi }$.

According to \cite{Floer-Witten.complex} this last cohomology equals  
$HI^{*}(S^{b}_{\Phi }, S^{a}_{\Phi }; \nabla S_{\Phi })$.

	Our proof will be complete if we are able to show that
 $$
      HI^{*} (S^{b}_{\Phi } , S^{a}_{\Phi } ; \nabla S_{\Phi }) 
\simeq  HI^{*}(S^{b}_{\Phi }, S^{a}_{\Phi } ; \xi _{\Phi })
 $$
 where  $\xi _{\Phi }$ is as in \cite{V-Loopspaces}. This may be 
explained by looking at figure \ref{fig.12c}.

	We represented there a pseudogradient vector field  $\eta $ for  
$S_{\Phi }$, equal to  $\nabla  S_{\Phi }$ in a neighbourhood of  
${\cal U}_{r,\varepsilon} - {\cal U}_{r,2\varepsilon/3} 
$ and to  $\xi _{\Phi }$ in a neighbourhood of  ${\cal
U}_{r,\varepsilon /2}$.

	Now, all critical points of  $S_{\Phi }$ are inside  ${\cal 
U}_{r,\varepsilon/2}$, and a heteroclinic trajectory for  $\eta 
$ stays inside  ${\cal U}_{r,\varepsilon/2}$ since  $\eta  = 
\xi _{\Phi }$ on  $\partial  {\cal U}_{r,\varepsilon /2}$ is 
tangent to  $\partial  {\cal U}_{r,\varepsilon/2}$.

	Thus  $I^{*}({\cal U}_{r,\varepsilon },\eta ) \simeq  
I^{*}({\cal U}_{r,\varepsilon/2} , \eta )$.

  \begin{figure}
	 \centering
	 \includegraphics*[scale=.8]{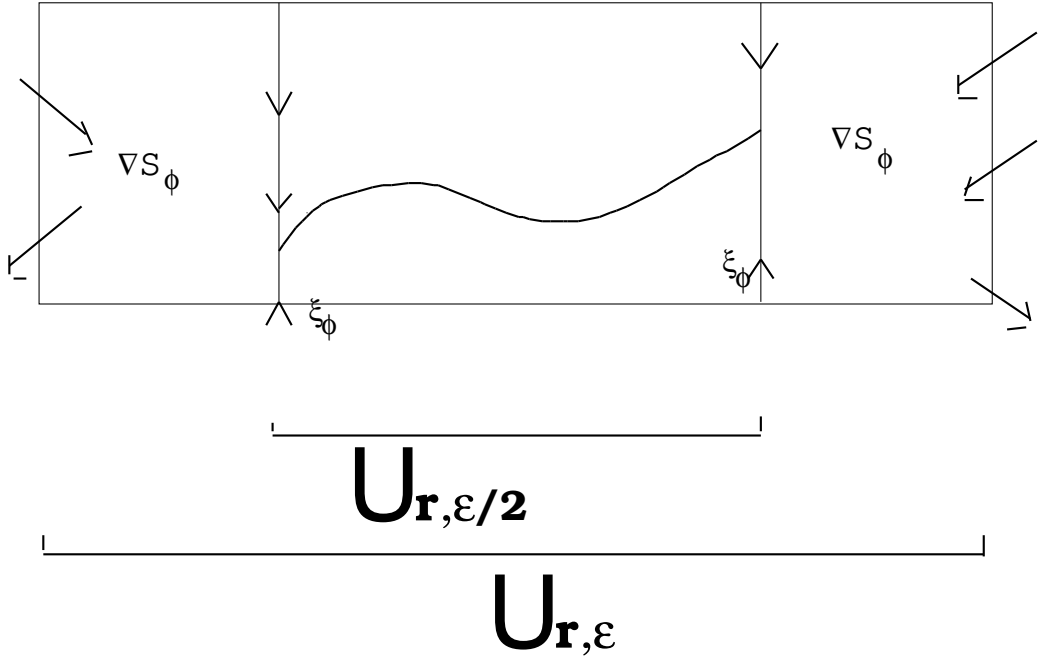}
	\caption{The phase portrait of the pseudo-gradient $\xi_{\Phi}$ of 
	$S_{\Phi}$}
\label{fig.12c}
 \end{figure}

But  $I^{*}({\cal U}_{r,\varepsilon},\eta ) \simeq  
I^{*}({\cal U}_{r,\varepsilon} , \nabla S_{\Phi })$ since  
$\eta  = \nabla  S_{\Phi }$ near  $\partial {\cal U}_{r,\varepsilon}$, and 
 $$
      I^{*} ({\cal U}_{r,\varepsilon/2} , \eta ) \simeq  
I^{*}({\cal U}_{r,\varepsilon/2} , \xi _{\Phi }).
 $$
Therefore
 $$
      I^{*}({\cal U}_{r,\varepsilon } , \nabla S_{\Phi }) 
\simeq  I^{*}({\cal U}_{r,\varepsilon /2} , \xi _{\Phi })
 $$
and our argument obviously  extends if we restrict ourselves to  
$S^{b}_{\Phi }-S^{a}_{\Phi }$. Thus

$$
 I^{*}(S^{b}_{\Phi } , S^{a}_{\Phi } , \nabla S_{\Phi }) \simeq  
I^{*}(S^{b}_{\Phi }, S^{a}_{\Phi }, \xi _{\Phi }).
 $$

Note that these indices do not depend on  $\varepsilon $, for  
$r$ large enough, hence it is not important that the previous inequality 
relates the index of  ${\cal U}_{r,\varepsilon } $ to the 
index of  $U_{r,\varepsilon /2}$.

\section{Appendix}
Let $(M,\omega)$ be a symplectic manifold with contact type boundary,
and $J_0$ be an admissible almost complex structure on $M$.

Given $U$ a domain in $M$, and $x_0$ in $U$, we consider
$$
w_{J_{0},x_{0}} (U)= \inf \lbrace \int_C \omega \quad \mid C\quad \text {is
$J_0$- holomorphic},\quad x_0 \in C \rbrace
$$
and $w(U)$, the usual Gromov width, is then given by
$$
w(U)= \sup \lbrace w_{J_{0}, x_{0}} (U) \mid x_0 \in U,
J_0 \quad \text{is admissible}\quad \rbrace
$$
A natural question is to compute the limits
$$
\displaystyle\lim_{k \to + \infty} w(U^k)=\overline w (U)
$$
$$
\displaystyle\lim_{k \to + \infty} w_{J_{0},x_{0}}(U^k)=\overline
w_{J_{0},x_{0}}(U)
$$
Here we still denote by $J_0$ the almost complex structure $J_0 \times
\ldots \times J_0$ on $M^k$, and by $x_0$ the point $(x_0, \ldots, x_0)$
in $M^k$.

Note that  the sequence $w_{J_{0},x_{0}}(U)$ is obviously
decreasing.

On the other hand, $\overline w (U)$ is not, a priori, equal to sup
$\lbrace \overline w_{J_{0},x_{0}} (U) \vert x_0 \in U, J_0\, \mbox
{is admissible} \rbrace = \widetilde w (U)$ since there are many
more almost complex structures on $M^k$ than those of the type $J_0
\times \ldots \times J_0$. Clearly, we have $\widetilde w (U) \leq
\overline w (U)$.

While it is clear, if $U$ contains a symplectic ball of radius $r$, that
$\overline w(U) \geq \pi r^2$, no such lower bound holds for
$\overline w_{J_{0},x_{0}}(U)$.

In fact, it is not a priori obvious that $\overline w_{J_{0}, x_{0}}(U)$ is
non zero. This is what we prove in this appendix.

\begin{Prop} Let $g_0$ be the metric associated to $(\omega, J_0)$. Then,
if the sectional curvature of $g_0$ is bounded by $c$, and injectivity radius
at $x_0$ bounded by $\rho$, we have
$$
\overline w_{J_0,x_0}(U) \geq \pi \rho^2 \delta(c, \rho)
$$
where $\delta(c, \rho)$ is a continuous positive function, such that $\delta(c, 0)=1$.
\end{Prop}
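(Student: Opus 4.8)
The plan is to establish a monotonicity-type lower bound for the area of a $J_0$-holomorphic curve in $M^k$ through the point $x_0=(x_0,\dots,x_0)$, where the bound is \emph{uniform in $k$}. The key observation is that although $M^k$ is high-dimensional, the product metric $g_0\times\dots\times g_0$ has sectional curvatures bounded by the same constant $c$ as $g_0$ (sectional curvature of a Riemannian product is the maximum of the sectional curvatures of the relevant two-plane's projections, hence still $\le c$), and the injectivity radius at $(x_0,\dots,x_0)$ is still $\ge\rho$. So the geometry controlling the monotonicity estimate does not degenerate as $k\to\infty$. This is exactly the phenomenon already exploited in the proof of Lemma in section \ref{II.2}, where the author stresses that ``our lower bounds are independent from $r$, since they only depend on an upper bound for the sectional curvatures of the metric.''

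First I would recall the classical monotonicity lemma for pseudo-holomorphic curves (e.g.\ in the form in Sikorav's lectures, or Hummel's book): if $(W,\omega,J)$ is almost complex with associated metric of sectional curvature $\le c$, $x_0\in W$, and $r$ is less than the injectivity radius at $x_0$ and less than some $r_0(c)$, then any $J$-holomorphic curve $C$ passing through $x_0$ with $\partial C\cap B(x_0,r)=\emptyset$ satisfies
$$
\area(C\cap B(x_0,r))\ \ge\ \pi r^2\,\delta(c,r),
$$
with $\delta(c,r)$ continuous, positive, and $\delta(c,0)=1$. (When $c\le 0$ one gets the clean bound $\pi r^2$; for $c>0$ the correction factor $\delta$ involves the comparison with the model space of constant curvature $c$, and one must take $r<\pi/(2\sqrt c)$.) Then I would apply this verbatim on $W=M^k$ with $J=J_0\times\dots\times J_0$: given any admissible $J_0$ and any $J_0$-holomorphic curve $C\subset U^k$ through $x_0$, either $C$ stays inside $B(x_0,\rho)$ — impossible if $C$ is a closed curve or has boundary only on $\partial U^k$ and $U$ is, say, relatively compact, so this case is excluded by hypothesis or handled by the standard argument that a closed holomorphic curve cannot be contained in a small ball — or $C$ exits $B(x_0,\rho')$ for every $\rho'<\rho$, whence $\area(C)\ge\area(C\cap B(x_0,\rho'))\ge\pi(\rho')^2\delta(c,\rho')$. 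Letting $\rho'\to\rho$ gives $\int_C\omega\ge\pi\rho^2\delta(c,\rho)$, and taking the infimum over such $C$ and then over $k$ yields $\overline w_{J_0,x_0}(U)\ge\pi\rho^2\delta(c,\rho)$.

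The step I expect to be the genuine content — rather than a routine citation — is verifying that the two geometric quantities feeding the monotonicity lemma are stable under taking products: that $\sup$ of sectional curvatures of $(M^k,g_0^{\times k})$ equals $\sup$ of sectional curvatures of $(M,g_0)$, and that $\mathrm{inj}_{(x_0,\dots,x_0)}(M^k)=\mathrm{inj}_{x_0}(M)$ (geodesics in a product are products of geodesics, so the first conjugate point and the shortest geodesic loop are no closer than in a single factor). The curvature fact requires a small computation: for a $2$-plane $\sigma$ in $T_{(x_0,\dots,x_0)}M^k$ spanned by $(v_1,\dots,v_k),(w_1,\dots,w_k)$, the sectional curvature is $\sum_i K(v_i,w_i)\,|v_i\wedge w_i|^2 / |\sum_i v_i\wedge w_i|^2 \le c$ since each $K(v_i,w_i)\le c$ and $|\,\cdot\,|^2$ terms are nonnegative — this is where one sees the product structure does not create new large positive curvature. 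Once this is in hand, the rest is the standard monotonicity argument applied on a fixed (if high-dimensional) Riemannian manifold, and the conclusion — including the key point that the bound is independent of $k$ — follows immediately. Finally, since the almost complex structures on $M^k$ used here are precisely the product ones (that is all that enters $w_{J_0,x_0}(U^k)$ by definition), there is no issue about the larger space of non-product structures, which is exactly why $\overline w_{J_0,x_0}$, unlike $\overline w$, is the right object for this clean estimate.
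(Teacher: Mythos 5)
Your proposal follows essentially the same strategy as the paper: prove (or cite) a monotonicity-type lower bound $\area\bigl(C\cap B_{g_0}(r)\bigr)\geq \pi r^2\,\delta(c,r)$ for a holomorphic curve through $x_0$, whose constant depends only on a curvature bound and the injectivity radius, and then observe that neither quantity degenerates when one passes from $U$ to $U^k$. The main difference is that where you cite the monotonicity lemma as classical, the paper proves it directly: it sets $a(r)=\area\bigl(C\cap B_{g_0}(r)\bigr)$, compares $C$ to the geodesic cone over $C\cap\partial B_{g_0}(r)$ (since $C$ is minimal for $g_0$), bounds the area of that cone via Rauch-type estimates on $\|D\exp_{x_0}\|$ and $\|D\exp_{x_0}^{-1}\|$, and integrates the resulting differential inequality $a(r)\leq\frac{r}{2}\varphi(Mr)\,a'(r)$ to get $a(r)\geq\pi r^2\exp(\eta(cr))$. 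That self-contained derivation is what makes it visible that the only geometric input is the curvature bound, which is exactly the point you rightly flag as the genuine content for the uniformity in $k$.

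Two small corrections to your product-curvature computation. First, your formula for the sectional curvature of a two-plane in a product has the wrong denominator: for $u=(u_i)$, $v=(v_i)$ it should be $|u\wedge v|^2=|u|^2|v|^2-\langle u,v\rangle^2$, which exceeds $\sum_i|u_i\wedge v_i|^2$ by the nonnegative mixed terms $\sum_{i<j}\bigl(|u_i|^2|v_j|^2+|u_j|^2|v_i|^2-2\langle u_i,v_i\rangle\langle u_j,v_j\rangle\bigr)$; so $K(\sigma)$ is not a convex combination of the factor curvatures (mixed planes are flat). Since the true denominator is larger, the conclusion $|K(\sigma)|\leq c$ still holds, but the intermediate step as written is incorrect. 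Second, the claim that the curvature bound is \emph{unchanged} under products is only true for a bound on $|K|$; if $\sup K<0$ on $M$ then $\sup K=0$ on $M^k$ for $k\geq2$ --- this is what the paper's parenthetical ``even though the bound may change as we go from $k=1$ to $k=2$'' is alluding to. Neither point affects the argument, since what matters is that the bound is finite and independent of $k$ for $k\geq2$.
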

\begin{Lem}
Let $B_{g_{0}}(r)$ be the ball of radius $r$, centered at $x_0$, for the metric
$g_0$. Let $C$ be a minimal surface for $g_0$ through $x_0$. Then
$$
\mbox{area}\,(C \cap B_{g_{0}}(r)) \geq \pi r^2 \exp(\eta(cr))
$$
where $\eta$ is continuous, $\eta(0)=0$, and $c$ is the upper bound of the
sectional curvature of $g_0$.
\end{Lem}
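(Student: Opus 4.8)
The plan is to prove this as a comparison between the area of a minimal surface through $x_0$ in the metric $g_0$ and the corresponding Euclidean (flat) quantity, the idea being that for a flat metric a minimal surface through the center of a ball has area at least $\pi r^2$ by the monotonicity formula, and curvature bounds only perturb this by a controlled multiplicative factor. First I would recall the classical monotonicity formula for minimal surfaces: if $C$ is a minimal surface (equivalently, a surface with zero mean curvature vector) passing through $x_0$, and one sets $A(r) = \mathrm{area}(C \cap B_{g_0}(r))$, then in Euclidean space $r^{-2} A(r)$ is monotone nondecreasing with limit $\pi$ as $r \to 0$, giving $A(r) \geq \pi r^2$. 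In a Riemannian manifold with sectional curvature bounded above by $c$, the same argument—integrating the first variation of area against the radial vector field and using the Hessian comparison theorem for the distance function $d(x_0, \cdot)$—yields that $e^{\eta(cr)} r^{-2} A(r)$ is monotone for an appropriate continuous correction term $\eta$ with $\eta(0)=0$, provided $r$ is below the injectivity radius. Taking $r \to 0$ and using that the density of $C$ at the smooth point $x_0$ is exactly $1$ (tangent cone is a plane) gives $A(r) \geq \pi r^2 e^{-\eta(cr)}$; absorbing the sign into $\eta$ gives the stated inequality $\mathrm{area}(C \cap B_{g_0}(r)) \geq \pi r^2 \exp(\eta(cr))$ with the convention that $\eta$ is the (possibly negative) continuous function vanishing at $0$.

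The key steps, in order, are: (1) write down the first variation formula for the area of $C \cap B_{g_0}(r)$ using the deformation generated by the radial field $\nabla d$, noting that the mean-curvature term drops because $C$ is minimal, so only the boundary term and the tangential divergence of $\nabla d$ along $C$ survive; (2) estimate $\mathrm{div}_C(\nabla d)$ from below using the Hessian comparison $\mathrm{Hess}\, d \geq \mathrm{ct}_c(d)\, (g - dd \otimes dd)$ valid for sectional curvature $\leq c$, which bounds $\mathrm{div}_C(\nabla d) \geq \tfrac{2}{r} - C(c) r + o(r)$ along the two-dimensional $C$; (3) convert this differential inequality for $A(r)$ into the monotonicity of $\Phi(r) = e^{\eta(cr)} A(r)/r^2$ for a suitable primitive $\eta$; (4) pass to the limit $r \to 0$ using that $x_0$ is a smooth interior point of $C$ so that $\lim_{r\to 0} A(r)/r^2 = \pi$.

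The main obstacle I anticipate is getting the correction function $\eta$ genuinely continuous with $\eta(0) = 0$ and, crucially, independent of everything except the curvature bound $c$ and the radius—this is exactly the uniformity the author flags as "particularly important" in the body of the paper, since the estimate must survive $r \to \infty$ in the number of factors with curvature staying bounded. The delicate point is that the Hessian comparison is only valid inside the injectivity radius, so the lemma is really only asserting the estimate for $r$ below $\rho$ (the injectivity radius), which is consistent with how it is invoked in the Proposition. One must also be slightly careful that "minimal surface" here is used in the sense relevant to pseudo-holomorphic curves (which are indeed minimal for $g_0$ by the Wirtinger/calibration property), so no extra hypothesis on $C$ beyond minimality and passing through $x_0$ is needed. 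Modulo tracking these constants, the argument is the standard monotonicity computation.
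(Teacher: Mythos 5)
Your proposal is correct, and it takes a genuinely different route from the paper's. The paper does not run the first-variation/divergence-theorem monotonicity computation you sketch. Instead it argues by \emph{comparison with the cone}: setting $a(r)=\area(C\cap B_{g_0}(r))$, it uses $a'(r)=\length(C\cap\partial B_{g_0}(r))$ (coarea) and the fact that the minimal $C$ has area no larger than the geodesic cone over $C\cap\partial B_{g_0}(r)$ with vertex $x_0$; the cone area is then bounded above by $\tfrac r2\varphi(Mr)\,a'(r)$ using the two-sided derivative bounds $\Vert D\exp_{x_0}\Vert\le \sinh(Mr)/(Mr)$ and $\Vert D\exp_{x_0}^{-1}\Vert\le Mr/\sin(Mr)$, yielding the same differential inequality $a'(r)/a(r)\ge 2/(r\varphi(Mr))$ that you obtain and the same integration and $r\to 0$ limit. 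The two approaches differ in what property of $C$ they invoke: yours uses only \emph{stationarity} (first variation vanishes), and controls $\mathrm{div}_C(\nabla d)$ by Hessian comparison for the distance function; the paper's uses the stronger \emph{minimizing} property (area at most that of the cone competitor with the same boundary), which is legitimate here because the relevant $C$'s are $J$-holomorphic and hence calibrated by $\omega$, but is not automatic from ``minimal'' in the stationary sense. Your version is thus slightly more robust as a general Riemannian statement; the paper's is more elementary to estimate, since it needs only pointwise bounds on $D\exp$ and $D\exp^{-1}$ rather than a Hessian comparison along $C$. Both correctly require $r$ below the injectivity radius at $x_0$ (as you note, and as the Proposition's hypothesis on $\rho$ reflects), and both deliver the uniformity in the number of factors that the author flags: the bound depends only on the sectional curvature bound of the product metric, which stays controlled as $k\to\infty$.
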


\proof It is similar to the case where $g_0$ is the euclidean metric.
Set
$$
 a(r)= {\area}\,(C \cap B_{g_0}(r))
$$
then $a'(r)= \mbox{length} (C \cap \partial B_{g_{0}}(r))$ and since $C$ is
minimal, $a(r)$ must be less than the area of the cone through $x_0$,
spanned by $C \cap \partial B_{g_{0}}(r)$.

Let us then compute the area of such a cone. It is clearly given by
$\int_0^r \mbox{length}(\gamma_s) ds$ where
$\gamma_s(t)=\exp (\frac{s}{r}\cdot \exp^{-1}(c_r(t)))$ and
$c_r(t)$ is the curve $C \cap \partial B_{g_{0}}(r)$, the exponential being
taken at $x_0$.

Let $M$ be a bound on the sectional curvature of $g_0$. Then we have, by classical
comparison theorems (\cite{Pansu} p.117, remark 8.14b)
$$
\Vert D \exp_{x_{0}}(u) \Vert \leq \frac{\sinh(Mr)}{Mr}\; {\text for}\; \Vert u
\Vert \leq r
$$
$$
\Vert D \exp_{x_{0}}^{-1} (y) \Vert \leq \frac{Mr}{\sin (Mr)}\; {\text for}\;
y
\in B(x_0,r)
$$
Thus
$$
\mbox{length}\,(\gamma_s) \leq \frac{s}{r} \frac{\sinh (Mr)}{\sin(Mr)} 
\cdot \length(Cr)
$$
and
$$
\int_{0}^{r} \mbox{length}\,(\gamma_s) ds \leq \frac{r}{2}
\frac{\sinh (Mr)}{\sin (Mr)}\cdot \length(Cr)
$$
and we have
$$
a(r) \leq \frac{r}{2} \varphi(Mr)a'(r)
$$
so that 

\begin{eqnarray*}
\frac{a'(r)}{a(r)} & \geq & \frac{2}{r \varphi(Mr)}\\
\log(\frac{a(r)}{a(\varepsilon)}) & \geq &  \log \frac{r^2}{\varepsilon^2} +
\int_\varepsilon^r \frac{2}{ u \varphi(Mu)} (1 - \varphi(Mu)) du\\
&\geq& \log(\frac{r^2}{\varepsilon^2}) + \int_{M \varepsilon}^{Mr} 2
\frac{1-\varphi(v))}{v \varphi(v)} dr
\end{eqnarray*}

Since $1 - \varphi(u) \sim u$ as $u$  goes to zero, the quantity $\int_{M
\varepsilon}^{Mr} \frac{2}{u(\varphi(u))}(1-\varphi(u))du$ converges
to $\eta(Mr)$ as $\varepsilon$ goes to zero, with $\eta$ continuous and
$\eta(0)=0$.

Then, since $\displaystyle\lim_{\varepsilon \to 0}
\frac{a(\varepsilon)}{\varepsilon^2}\geq \pi$, we have $a(r) \geq \pi r^2
\mbox{exp}(\eta(Mr))$. \cfd

Now, replacing $U$ by $U^k$, the sectional curvature of the induced
metrics stays bounded (even though the bound may change as we go from
$k=1$ to $k=2$), and since $U^k$ contains $(B_{g_0}(\rho))^k$, we get
$$
\omega_{J_0, x_0}(U^k) \geq \pi \rho^2 \delta (c, \rho).$$
\cfd

\part*{}
\addcontentsline{toc}{section}{References}

\end{document}